\documentclass[11pt, oneside, letterpaper]{amsart} 

\usepackage[margin=1.15in, left=1.2in, right=1.2in]{geometry}
\usepackage{amsmath,amsfonts,amssymb,amscd,amsthm,amsbsy,epsf,graphics} 
\usepackage{stmaryrd}
\usepackage{setspace}
\usepackage{comment}
\usepackage[dvipsnames]{xcolor}
\usepackage[colorlinks,linkcolor=blue,citecolor=blue]{hyperref} 
\usepackage{epsfig} 
\usepackage{subcaption}
\usepackage{tikz-cd}
\usepackage{bm}
\usepackage{graphicx}  
\usepackage{lipsum}
\usepackage{enumitem} 

\usepackage{todonotes} 

\usepackage[square,numbers]{natbib}

\makeatletter
\def\l@subsection{\@tocline{2}{0pt}{4pc}{5pc}{}}
\let\oldtocsection=\tocsection
\let\oldtocsubsection=\tocsubsection
\let\oldtocsubsubsection=\tocsubsubsection
 
\renewcommand{\tocsection}[2]{\hspace{0em}\oldtocsection{#1}{#2}}
\renewcommand{\tocsubsection}[2]{\hspace{0em}\oldtocsubsection{#1}{#2}}
\renewcommand{\tocsubsubsection}[2]{\hspace{2em}\oldtocsubsubsection{#1}{#2}}

\newtheorem{theorem}{Theorem}[section] 
\newtheorem*{theorem*}{Theorem}
\newtheorem{corollary}[theorem]{Corollary}
\newtheorem*{corollary*}{Corollary}
\newtheorem{lemma}[theorem]{Lemma}
\newtheorem{proposition}[theorem]{Proposition}
\newtheorem*{proposition*}{Proposition}
\newtheorem{conjecture}[theorem]{Conjecture}
\newtheorem{claim}[theorem]{Claim}
\newtheorem{question}[theorem]{Question}
\newtheorem*{question*}{Question}

\newtheorem*{problem*}{Problem}

\theoremstyle{definition}
\newtheorem{definition}[theorem]{Definition}

\newtheorem{remark}[theorem]{Remark} 
\newtheorem*{remark*}{Remark}

\newtheorem*{acknowledgement*}{Acknowledgements}

\theoremstyle{plain}

\newtheoremstyle{cases}
  {6pt plus 3 pt}
  {2pt}
  {\bfseries}   
  {}
  {\bfseries}
  {.}
  {.5em}
  {}
\theoremstyle{cases}

\numberwithin{subcase}{case} 
\numberwithin{subsubcase}{subcase}
\numberwithin{equation}{subsection} 

\graphicspath{{figures/}} 

\title{Toroidal $3$-manifolds have circularly orderable fundamental groups}

\author[Steven Boyer]{Steven Boyer}
\thanks{Steven Boyer was partially supported by NSERC grant RGPIN 2024-06196}
\address{D\'epartement de Math\'ematiques, Universit\'e du Qu\'ebec \`a Montr\'eal, 201 President Kennedy Avenue, Montr\'eal, Qc., Canada H2X 3Y7.}
\email{boyer.steven@uqam.ca}
\urladdr{http://www.cirget.uqam.ca/boyer/boyer.html}

\author[Cameron McA. Gordon]{Cameron McA. Gordon} 
\address{Department of Mathematics, University of Texas at Austin, 1 University Station, Austin, TX 78712, USA.}
\email{gordon@math.utexas.edu}

\author[Ying Hu]{Ying Hu}
\thanks{Ying Hu was partially supported by NSF grant DMS-2409398}
\address{School of Mathematical Sciences, Shanghai Jiao Tong University, 800 Dongchuan Rd, Minhang District, Shanghai, 200240, China}
\email{yinghu@sjtu.edu.cn}
\urladdr{https://yinghu-math.github.io}

\thanks{2010 Mathematics Subject Classification.  Primary 57M25, 57M50, 57M99}

\thanks{Key words: toroidal $3$-manifolds, circular-orderability, virtual left-orderability, finite cyclic covers, $L$-space conjecture}

\begin{document}

\begin{abstract}
In this article we prove that toroidal $3$-manifolds have circularly-orderable fundamental groups by showing that they admit finite cyclic covers with left-orderable fundamental groups. These covers are also not $L$-spaces and often admit co-orientable taut foliations, as predicted by the $L$-space conjecture. As a consequence, we verify a conjecture of Ba and Clay, which characterises graph manifolds with circularly-orderable fundamental groups. 
\end{abstract}

\maketitle

\section{Introduction}

A group is left-orderable ($LO$) if it is non-trivial and admits a strict total order that is invariant under left multiplication. We say that a $3$-manifold is $LO$ if its fundamental group is left-orderable. 

Finite groups are readily seen to be non-left-orderable, as are many infinite groups, including the fundamental groups of certain $3$-manifolds. Hyperbolic examples of non-$LO$ manifolds can be obtained by taking double branched covers of non-split alternating links \cite{BGW13}, while Sol manifold rational homology $3$-spheres provide simple examples of toroidal  manifolds which are non-$LO$ \cite{BRW05}. 

A major reason for interest in the property $LO$ for $3$-manifolds is the $L$-space conjecture, which asserts that an orientable, compact, connected, irreducible $3$-manifold is $LO$ if and only if it is not an $L$-space (is $NLS$), and if and only if it admits a co-orientable taut foliation (is $CTF$) \cite{BGW13, Juhasz2015}. If $M$ has positive first Betti number then it has all three properties (\cite{BRW05}, \cite{Gab83}). In particular this holds if $M$ has non-empty boundary (and is not the $3$-ball), so we restrict our attention to closed $3$-manifolds. It will also be convenient for us to restrict our attention to manifolds with non-trivial fundamental groups (i.e. $M \not \cong S^3$). We assume this below.

Any closed $3$-manifold with infinite fundamental group has a finite cover with positive first Betti number; this is classical for Seifert fibred manifolds, due to Luecke for toroidal manifolds \cite{Lu88}, and to Agol for hyperbolic manifolds \cite{Agol13}. Hence all orientable, irreducible $3$-manifolds with infinite fundamental groups are virtually $LO$, though in general there is little understanding of which finite covers realize this property. 

In this article, we study when irreducible $3$-manifolds with infinite fundamental group admit finite cyclic covers that are $LO$. Not only are finite cyclic covers among the easiest covers to study, but, as we will discuss below, the existence of an $LO$ finite cyclic cover is equivalent to such a $3$-manifold group being circularly orderable. 

Intuitively, one can think of a circular order on a non-trivial countable group as an embedding of the group elements into the circle $S^1$ such that left multiplication in the group extends to an orientation preserving homeomorphism of the circle (see Definition \ref{def: co}). It is not hard to see that a left-order on a group gives rise to a circular order (\S \ref{sec: background}). Hence, a left-orderable group is circularly-orderable. It is also known that a finite group is circularly-orderable if and only if it is cyclic (see \cite[Proposition 2.5]{CG21} for instance).

We call a $3$-manifold $CO$ if its fundamental group admits a circular order. In \cite{BaCl24}, Ba and Clay showed that an orientable, irreducible $3$-manifold $M$ with infinite fundamental group is $CO$ if and only if $M$ admits a finite cyclic cover that is $LO$ \cite[Theorem 1.1]{BaCl24}. 
They also conjectured that graph manifolds with infinite fundamental groups are $CO$ \cite[Conjecture 6.11]{BaCl24} and verified the truth of this statement except for several infinite families. Here, a {\it graph manifold} is an orientable $3$-manifold which is either Seifert fibred or an irreducible $3$-manifold all of whose JSJ pieces are Seifert fibred.

In this article, we show that, in fact, all toroidal $3$-manifolds are $CO$.

\begin{theorem}
\label{thm: tor means co}
A closed, orientable, irreducible, toroidal $3$-manifold is $CO$. 
\end{theorem}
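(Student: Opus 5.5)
By the theorem of Ba and Clay \cite[Theorem 1.1]{BaCl24}, it suffices to show that $M$ admits a finite cyclic cover $\widetilde M \to M$ with $\pi_1(\widetilde M)$ left-orderable. We may assume $b_1(M) = 0$, since otherwise $M$ is already $LO$ \cite{BRW05}. The strategy is to build $\widetilde M$ from an epimorphism $\varphi: \pi_1(M) \to \mathbb{Z}/n$. We choose $\varphi$ so that the preimage of some JSJ torus (or an incompressible torus $T$) disconnects in a controlled way and the cover decomposes along lifts of JSJ tori into pieces whose fundamental groups can be ordered compatibly.

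\textbf{Step 1: set up the gluing criterion.} Fix an incompressible torus $T \subset M$, chosen from the JSJ family. We would use the standard gluing theorem for left-orders: if $N = N_1 \cup_T N_2$ and each $\pi_1(N_i)$ admits a left-order whose restriction to $\pi_1(T)$ is compatible under the gluing, then $\pi_1(N)$ is $LO$. The same holds along several tori, via Bludov–Glass type amalgamation. The compatibility condition can be phrased through ``detected slopes'': each piece carries a family of orders, and each order determines a slope on the boundary torus. We would also use the fact that when a piece has $b_1 > 0$ relative to its boundary in a suitable sense, its orders can be chosen to detect many slopes.

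\textbf{Step 2: produce the cyclic cover.} Since $b_1(M) = 0$, each side $M_1, M_2$ of $T$ (or each JSJ piece) has $H_1$ of rank at least $1$ and a rational longitude $\lambda_i$ on its boundary. The finite group $H_1(M)$ surjects onto finite cyclic groups. The key idea is to pick a cyclic quotient $\varphi$ for which the induced covers of the pieces are ``large'': the restriction of $\varphi$ to $\pi_1(T)$ should be nontrivial on an appropriate slope, so that the lifts $\widetilde M_i$ acquire several boundary components or increased homology. We may then perform a further cyclic cover, which is still cyclic over $M$ after composing, using a prime $p$ dividing the torsion of $H_1$ coming from the gluing, in the spirit of Luecke \cite{Lu88}. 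We aim to arrange that in $\widetilde M$ some complementary piece has at least two boundary tori, or that $b_1(\widetilde M) > 0$ outright. In the latter case we are done immediately by \cite{BRW05}.

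\textbf{Step 3: the main obstacle.} The main difficulty is when no cyclic cover raises $b_1$, for instance Sol-type or graph manifold pieces with torsion-only homology. Here we must verify the slope compatibility of Step 1 directly in $\widetilde M$. For Seifert pieces we would use the known description of $LO$-detected slopes (horizontal foliations, Eisenbud–Hirsch–Neumann inequalities) and show that passing to a cyclic cover of large order $n$ makes the range of detected slopes cover the gluing images. For hyperbolic pieces we would use that the pieces have boundary, so their lifts are $LO$ with detected slopes including those coming from the lifted rational longitude. The concrete plan is to take $n$ to be a large multiple of the order of $[\lambda_i]$ in $H_1(M)$. Lifting then makes the relevant slopes rational longitudes in the cover, and rational longitudes are always order-detected. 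This yields compatible orders across all lifted tori and hence an $LO$ cyclic cover, so $M$ is $CO$.
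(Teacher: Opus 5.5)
Your proposal has a genuine gap, and it is in Step 3, where all the difficulty lies.

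\textbf{The compatibility mechanism fails.} Your concrete mechanism is to arrange that the relevant slopes become rational longitudes in the cover and then use that rational longitudes are always $LO$-detected. This cannot make the orders compatible. A finite cover of a torus induces an isomorphism on rational homology. So on every component of the preimage of $T$, the lifts of $\lambda_1$ and $\lambda_2$ are still distinct slopes. The longitude detected on the side over $M_1$ is a lift of $\lambda_1$, the one detected on the side over $M_2$ is a lift of $\lambda_2$, and the gluing theorem never applies. (Longitude detection is homological: it comes from a map to $\mathbb Z$ that kills the longitude. Such detections can only be matched across the lifted tori when the cover already has $b_1>0$, which is exactly the situation you assume away in Step 3.)

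What is needed is that one side detect a slope \emph{other} than its own rational longitude, namely the lift of the other side's longitude. Nothing in your proposal supplies this: for hyperbolic pieces you only get the longitude, and the Seifert discussion is a hope rather than an argument.

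\textbf{Other problems.} Two auxiliary steps are false as stated.
\begin{enumerate}
\item Since $b_1(M)=0$, $H_1(M)$ is finite. So $M$ has only finitely many cyclic covers, and there is no ``cyclic cover of large order $n$''.
\item A cyclic cover of a cyclic cover of $M$ need not be cyclic, or even regular, over $M$.
\end{enumerate}
You have also misidentified the hard case. When both sides have torsion in $H_1$ (e.g.\ Sol manifolds), a cyclic cover with $b_1>0$ does exist. The hard case is when one side is an integer homology solid torus.

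\textbf{How the paper fills the gap.} The paper cuts along a single essential torus, $W=M_1\cup_T M_2$. In Proposition~\ref{prop: ls} it uses the mod $ls$ class of the $2$-chain $a\bar d_2F_1-\bar d_1F_2$ to build a cyclic cover of degree $ls$. In this cover $F_1$ and $F_2$ lift, and the lifted longitudes are at distance exactly $1$ on each lifted torus. The cases then go as follows.
\begin{enumerate}
\item If $d_1,d_2>1$, a Mayer--Vietoris count of components already gives $b_1>0$.
\item If neither side is an integer homology solid torus, Proposition~\ref{prop: t_i ne 0} gives a different cyclic cover with $b_1>0$, via $H_1(W,T)\cong T_1(M_1)\oplus T_1(M_2)$.
\item In the remaining case, $M_1$ is an integer homology solid torus, so $d_1=1$ and the lifted longitude of each piece over $M_1$ is integrally null-homologous. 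Lemma~\ref{lemma: lo detd} then supplies the missing non-longitudinal detection. The representation $\pi_1(M_1)\to\mathrm{Homeo}_+(S^1)$ from \cite{BGH26} has zero Euler class because $H^2(M_1)=0$. It therefore lifts to $\mathrm{Homeo}_{\mathbb Z}(\mathbb R)$, with the longitude acting with translation number $2g(M_1)-1>0$. This survives restriction to the cyclic cover and makes every distance-one slope $LO$-detected, in particular the lift of $\lambda_2$.
\end{enumerate}
Together with detection of the lifted-$\lambda_2$ multislope in the lift of $M_2$, Proposition~\ref{prop: general * gluing} then yields the $LO$ cyclic cover.
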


As a corollary of Theorem \ref{thm: tor means co}, we obtain the following characterisation of $CO$ graph manifolds, which resolves the conjecture of Ba and Clay mentioned above. 

\begin{corollary}
    \label{cor: graph manifolds}
A graph manifold is $CO$ if and only if it is not a Seifert fibred manifold with finite, non-cyclic fundamental group.
\end{corollary}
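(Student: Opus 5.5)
The corollary should follow by combining Theorem \ref{thm: tor means co} with the known Seifert fibred case and the elementary fact that finite circularly-orderable groups are cyclic. I would split according to whether the graph manifold $M$ is Seifert fibred, using the standing assumption that $M \not\cong S^3$ so that $\pi_1(M)$ is non-trivial.

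\emph{Necessity.} Suppose $M$ is Seifert fibred with finite, non-cyclic fundamental group. By \cite[Proposition 2.5]{CG21}, a finite group is circularly orderable if and only if it is cyclic. Hence $\pi_1(M)$ admits no circular order, and $M$ is not $CO$.

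\emph{Sufficiency, non-Seifert case.} Suppose $M$ is a graph manifold that is not Seifert fibred. By definition it is irreducible, and all of its JSJ pieces are Seifert fibred. Since $M$ is not itself Seifert fibred, its JSJ decomposition is non-trivial, so $M$ contains an essential torus and is toroidal. It is also closed, by our standing reduction to closed manifolds: a manifold with non-empty boundary has positive first Betti number, hence is $LO$ and therefore $CO$. Theorem \ref{thm: tor means co} now gives that $M$ is $CO$.

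\emph{Sufficiency, Seifert case.} Suppose $M$ is Seifert fibred and is not excluded by the hypothesis. If $\pi_1(M)$ is finite, it must be cyclic and non-trivial, and non-trivial finite cyclic groups are circularly orderable (act by rotations). The remaining case is a Seifert fibred $M$ with infinite fundamental group.
\begin{itemize}
\item If $M$ is reducible, then $M \cong S^1 \times S^2$ or $M \cong \mathbb{RP}^3 \# \mathbb{RP}^3$. In the first case $\pi_1(M) \cong \mathbb{Z}$, which is $LO$. In the second case $\pi_1(M)$ is the infinite dihedral group $\mathbb{Z}/2 * \mathbb{Z}/2$. For this group I would exhibit a circular order directly: take the two involutions to be rotation by $\pi$ and a conjugate of it by a suitable circle homeomorphism, and check that the resulting action has a point with trivial stabilizer.
\item If $M$ is irreducible with infinite $\pi_1$, I would use \cite[Theorem 1.1]{BaCl24}. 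It then suffices to find a finite cyclic cover of $M$ that is $LO$. I would obtain this either from Ba and Clay's treatment of the Seifert case, or directly by taking a cyclic cover of $M$ that has positive first Betti number or is $LO$ by the known characterisation of $LO$ Seifert manifolds.
\end{itemize}

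The real content lies in Theorem \ref{thm: tor means co}, which we may assume. Within the corollary itself, I expect the main obstacle to be the Seifert fibred case with infinite fundamental group. Specifically, one must confirm that Ba and Clay's verified families, or a direct cyclic-cover argument, cover every such Seifert manifold, including the reducible manifold $\mathbb{RP}^3 \# \mathbb{RP}^3$. The aim is that the only families left open by \cite{BaCl24} are toroidal ones, so that Theorem \ref{thm: tor means co} disposes of them.
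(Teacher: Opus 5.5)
Your necessity direction, the non-Seifert case via Theorem~\ref{thm: tor means co}, and the two reducible manifolds are handled correctly. For $\mathbb{RP}^3\#\mathbb{RP}^3$ your circle action works, e.g.\ with two order-two elliptic elements of $PSL(2,\mathbb{R})$ having distinct fixed points; the paper simply uses $1\to\mathbb{Z}\to\mathbb{Z}/2*\mathbb{Z}/2\to\mathbb{Z}/2\to 1$ and Proposition~\ref{prop: lexicographic}.

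The genuine gap is the irreducible Seifert fibred case with infinite $\pi_1$. You leave it contingent on an unverified hope about which families \cite{BaCl24} treated. Your fallback of a cyclic cover with positive $b_1$ fails in general. If the base orbifold is $S^2(2,3,7)$, its orbifold group is perfect, so every epimorphism $\pi_1(M)\to\mathbb{Z}/n$ is already onto on the fibre class. Hence every finite cyclic cover merely unwraps fibres, is again Seifert fibred over $S^2(2,3,7)$ with non-zero Euler number, and is a rational homology sphere. Saying some cyclic cover is $LO$ ``by the known characterisation'' is not an argument until you identify the cover and verify the horizontal-foliation criterion for it.

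The missing idea is to exploit the geometry of the base orbifold. In the hyperbolic case no cyclic cover needs to be found at all.
\begin{itemize}
\item First discard Seifert manifolds with $b_1>0$ (they are $LO$) and toroidal ones (Theorem~\ref{thm: tor means co}). An atoroidal, irreducible Seifert fibred rational homology sphere fibres over $S^2$ with at most three cone points, and infinite $\pi_1$ forces this base $\mathcal{B}$ to be Euclidean or hyperbolic.
\item If $\mathcal{B}$ is hyperbolic, the composite $\pi_1(M)\to\pi_1^{orb}(\mathcal{B})\hookrightarrow PSL(2,\mathbb{R})\hookrightarrow\mathrm{Homeo}_+(S^1)$ has infinite image. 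So $M$ is $CO$ by Proposition~\ref{prop: inf image}: the kernel is trivial or $LO$, and Proposition~\ref{prop: lexicographic} applies.
\item If $\mathcal{B}$ is $S^2(3,3,3)$, $S^2(2,4,4)$ or $S^2(2,3,6)$, there is a surjection $\pi_1^{orb}(\mathcal{B})\to\mathbb{Z}/n$ ($n=3,4,6$) that is injective on the cone-point subgroups. So $\mathcal{B}$ has an $n$-fold cyclic torus cover. This lifts to an $n$-fold cyclic cover of $M$ that is a circle bundle over $T^2$, hence $LO$.
\end{itemize}
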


We prove Theorem~\ref{thm: tor means co} by constructing explicit finite cyclic $LO$ covers of toroidal $3$-manifolds. See Theorem \ref{thm: cocyclic nls and lo} and Remark \ref{rem: orders of the finite cover} below.

Combining \cite[Theorem 1.1]{BaCl24} with the $L$-space conjecture led Ba and Clay to formulate the following assertion. 

\begin{conjecture}[Conjecture 3.4 in \cite{BaCl24}]
If $W$ is an irreducible, rational homology $3$-sphere that is not a lens space, then the following are equivalent:
\vspace{-.1cm} 
\begin{enumerate}
\setlength\itemsep{0.3em}
\item[{\rm (1)}]  The fundamental group of $W$ is circularly orderable.
\item[{\rm (2)}]   There exists a finite cyclic cover $\widetilde{W}$ of $W$ that supports a co-orientable taut foliation.
\item[{\rm (3)}]   There exists a finite cyclic cover $\widetilde{W}$ of $W$ that is not an $L$-space.
\end{enumerate}
\end{conjecture}

Our main contribution addresses this conjecture in the context of toroidal manifolds.

A {\it knot manifold} is a compact, connected, orientable, irreducible $3$-manifold with boundary an incompressible torus. A {\it fibred knot manifold} is a knot manifold which fibres over the circle with compact, orientable fibre having connected boundary.

\begin{theorem}
\label{thm: cocyclic nls and lo}
An irreducible, toroidal rational homology $3$-sphere $W$ admits a finite cyclic cover $\widetilde W \to W$ such that $\widetilde W$ is $LO$ and $NLS$. Further, $\widetilde W$ can also be chosen to be $CTF$ if $W$ contains an essential torus which either bounds a fibred knot manifold or does not bound an integer homology solid torus.  
\end{theorem}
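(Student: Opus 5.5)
Let $T \subset W$ be an essential torus. Since $W$ is a rational homology sphere, $T$ separates $W$ into two knot manifolds $M_1, M_2$, and $W = M_1 \cup_T M_2$. The plan is to use a gluing theorem for the three properties. The relevant criterion (Boyer--Clay for $LO$, Hanselman--Rasmussen--Rasmussen--Watson for $NLS$, and the foliation gluing results for $CTF$) says that a union of two knot manifolds along a torus has the property as soon as the gluing map identifies slopes that are ``$LO$-detected'' (resp.\ $NLS$-detected, $CTF$-detected) in the two pieces. In every knot manifold, the rational longitude $\lambda_i$ is detected in all three senses. So the goal is a finite cyclic cover $\widetilde W$ in which some lift $\widetilde T$ of $T$ splits $\widetilde W$ into pieces $\widetilde M_1, \widetilde M_2$ whose rational longitudes are glued together, or, more flexibly, are glued to detected slopes.

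\textbf{Step 1: the cover.} Since $H_1(W)$ is finite, consider $\varphi\colon H_1(W)\to \mathbb Z/n$. The cover's restriction to each $M_i$ is determined by $\varphi|_{H_1(M_i)}$. The image of $\lambda_i$ in $H_1(M_i)$ has finite order, and $\lambda_1, \lambda_2$ are distinct slopes on $T$, because $W$ is a $\mathbb Q HS^3$. I will choose $n$ and $\varphi$ by working with the Mayer--Vietoris description
\[
H_1(W) = \bigl(H_1(M_1)\oplus H_1(M_2)\bigr)/H_1(T).
\]
The choice is made so that on a component $\widetilde T$ over $T$, the lifts of $\lambda_1$ and $\lambda_2$ become related in the right way. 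Concretely, a cyclic cover of $M_i$ unwraps the meridional directions transverse to $\lambda_i$ while $\lambda_i$ lifts with controlled degree. If the cover restricted to $T$ is the cyclic cover dual to $\lambda_2$ (i.e.\ $\varphi$ kills $\lambda_2$ and is large on a dual curve), then distance arguments show that on $\widetilde T$ the distance $\Delta(\widetilde\lambda_1,\widetilde\lambda_2)$ is multiplied by the covering degree along that direction. So instead I will aim for the lifted slopes to be glued to each other's detected slopes. In each $\widetilde M_i$ there is an infinite family of detected slopes near $\lambda_i$; for example, when $M_i$ is a fibred knot manifold, all slopes in an interval around the longitude are $CTF$-detected via Roberts' foliations. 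Taking $n$ divisible by a large multiple of $\Delta(\lambda_1,\lambda_2)$ and of the orders of $[\lambda_i]$ should push the gluing into this detected region.

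\textbf{Step 2: gluing.} The lifts $\widetilde M_i$ are again knot manifolds; they are irreducible with incompressible boundary, since covers of such manifolds remain so. Some component of the preimage of $T$ separates $\widetilde W$ into pieces, possibly with further JSJ tori. I will apply the gluing theorem to this decomposition, using that a rational longitude is $LO$- and $NLS$-detected. This gives $LO$ and $NLS$ for $\widetilde W$ whenever the lifted rational longitudes coincide, or when one is glued to a slope detected on the other side. For $CTF$, the rational longitude is detected only under extra hypotheses. These are exactly the stated cases: if $M_i$ is a fibred knot manifold, Roberts gives an interval of detected slopes; if $T$ bounds no integer homology solid torus, the rational longitude has order $>1$ and a Gabai-type sutured argument provides $CTF$ detection.

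\textbf{Main obstacle.} The hardest step is Step 1: arranging a cyclic cover of the whole of $W$ in which the gluing slopes match detected slopes on both sides simultaneously. Both pieces inherit the same $\varphi$, which is constrained by the Mayer--Vietoris relations. I would first try the cover induced by $H_1(W)\to H_1(W)/\langle\text{image of }H_1(M_2)\rangle$, which is cyclic when suitably quotiented. Under this cover $\widetilde M_2$ is a disjoint union of homeomorphic copies of $M_2$, while $M_1$ is unwrapped. I would then check that on a component the image of $\lambda_2$ coincides with a lift of $\lambda_1$, or at least with a multiple of it. If that fails, I would fall back on the slope-detection intervals to absorb the mismatch.
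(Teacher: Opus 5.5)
There is a genuine gap: Step 1, which you rightly call the heart of the matter, is never carried out, and the heuristics you use to guide it are wrong.

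\textbf{The cover.} The lifts of $\lambda_1$ and $\lambda_2$ to a component of $p^{-1}(T)$ can never coincide, since a finite cover of tori is injective on slopes. You also cannot take $n$ large: $H_1(W)$ is finite, so a surjection $H_1(W)\to\mathbb Z/n$ forces $n$ to divide its exponent. Your fallback $H_1(W)\to H_1(W)/\operatorname{im}H_1(M_2)\cong H_1(M_1,T)\cong T_1(M_1)$ kills $H_1(T)$. So every component of $p^{-1}(T)$ maps homeomorphically to $T$ and no slope moves; moreover this cover is trivial when $M_1$ is an integer homology solid torus.

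Most importantly, your distance computation is backwards. Suppose a component $\widetilde T$ is the degree $k$ cover of $T$ corresponding to $\langle\lambda_2,k\mu_2\rangle$. Since $\pi_*x\cdot\pi_*y=k\,(x\cdot y)$, one gets $\Delta(\tilde\lambda_1,\tilde\lambda_2)=s/\gcd(k,s)$. The distance is divided, not multiplied, and for $k=s$ (the subgroup $\langle\lambda_1,\lambda_2\rangle$) the lifted longitudes are at distance $1$.

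This is exactly what the paper arranges in Proposition~\ref{prop: ls}. The homomorphism $f\colon H_1(W)\to\mathbb Z/ls$ dual to the mod $ls$ cycle $a\bar d_2F_1-\bar d_1F_2$:
\begin{itemize}
\item kills $\lambda_1$ and $\lambda_2$;
\item has image of order $s$ on $H_1(T)$;
\item lets $F_1$ and $F_2$ lift;
\item gives the component counts $\bar d_2,\bar d_1,l$.
\end{itemize}
Building such a global homomorphism, compatible with both sides at once, is precisely the difficulty you flagged, and you discarded the right idea.

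\textbf{Detection.} Step 2 rests on a claim that is false in general: there is no interval of detected slopes around $\lambda_i$. In the twisted $I$-bundle over the Klein bottle, whose longitude has order $2$, only the rational longitude is $LO$- or $NLS$-detected.

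What is actually available is Proposition~\ref{prop: meridional detn}. Slopes at distance $1$ from an \emph{integrally null-homologous} longitude are $NLS$-detected. They are $CTF$-detected if the manifold fibres, and $LO$-detected under the extra hypothesis that it is an integer homology solid torus. Since $\widetilde M_{1j}$ is generally not one, the paper needs the Euler class argument of Lemma~\ref{lemma: lo detd}.

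Applying this requires $d_1=1$, so that the lifted $\tilde\lambda_1$ is integrally null-homologous. When $d_1,d_2>1$ the paper instead uses the component counts and Mayer--Vietoris to get $b_1(\widetilde W_0)>0$. When $d_1=1$ and neither $M_i$ is an integer homology solid torus, no detection argument gives $LO$. There the paper uses a second cover (Proposition~\ref{prop: t_i ne 0}): nonzero torsion in both $H_1(M_i)$ yields a cyclic cover with $b_1>0$. Your proposal has nothing in place of this.

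You also misread the $CTF$ hypothesis. Not bounding an integer homology solid torus does not force $d_i>1$: the exterior of a null-homologous knot in a lens space has $d=1$ but torsion $\mathbb Z/p$. What the hypothesis gives is nonzero torsion, which is what Proposition~\ref{prop: t_i ne 0} exploits.

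Finally, contrary to your Step 2, the rational longitude (as a multislope on $\widetilde M_2$) is always $CTF$-detected by Gabai. The fibred hypothesis is needed for the distance-one slope on the $\widetilde M_{1j}$ side.
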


\begin{remark}[The cyclic covers]
\label{rem: orders of the finite cover}
  The two finite cyclic covers used in the proof of Theorem \ref{thm: cocyclic nls and lo} are constructed in \S\ref{sec: constructions}. It is interesting to note that one of them, constructed  in Proposition \ref{prop: ls}, is a cover $\widetilde W_0 \to W$ of degree $ls$ where,  if we express $W$ as a union of two knot manifolds $W= M_1\cup M_2$ along their boundaries, $s$ is the distance between the rational longitudes of $M_1$ and $M_2$ and $l$ is the least common multiple of their orders in $H_1(M_1)$ and $H_1(M_2)$. Claims \ref{claim: nls, lo, ctf} and \ref{claim: nls} show that $\widetilde W_0$ is $NLS$, and so the $L$-space conjecture contends that it is also $LO$ and $CTF$, though as we explain in \S \ref{sec: proofs}, we cannot verify this in all cases. To complete the proof of Theorem \ref{thm: cocyclic nls and lo},  we construct another finite cyclic cover of positive first Betti number when $H_1(M_1)$ and $H_1(M_2)$ both have non-trivial torsion (see Proposition \ref{prop: t_i ne 0}), and this cover allows us to deal  with the remaining cases.  
    \end{remark}

 We conclude the introduction with a discussion of a possible topological characterisation of $CO$ manifolds. We restrict our attention to rational homology $3$-spheres as otherwise the manifold is both $LO$ and $CTF$ (\cite{BRW05, Gab83}) and therefore $CO$ and laminar. 
 
One consequence of the $L$-space conjecture would be that a closed, orientable, irreducible rational homology $3$-sphere is $LO$ if and only if it is $CTF$. This leads one to consider the possibility of an analogous characterisation of $CO$ rational homology $3$-spheres. In one direction, several conditions are known to imply $CO$: admitting a pseudo-Anosov flow, a co-orientable taut foliation, or a tight essential lamination with solid torus guts. Since each of these conditions implies that the manifold is laminar, i.e. contains an essential lamination, one wonders if laminar implies $CO$. Since laminar implies having infinite fundamental group, Corollary \ref{cor: graph manifolds} shows that laminar implies $CO$ for Seifert fibre spaces. Theorem \ref{thm: tor means co} shows that this is also true for toroidal manifolds, which are also laminar. This leaves the hyperbolic case. Also see the discussion after Question 6.3 in \cite{Calegari-problems-2003}.

\begin{question}
\label{qn: laminar imply co?}
Does laminar imply $CO$ for hyperbolic rational homology $3$-spheres?
\end{question} 

One difficulty here is that there are not many non-$CO$ hyperbolic $3$-manifolds known. In fact, to our knowledge, the only published example is the Weeks manifold [CD03], and it is not known whether or not it is laminar.

The following is a special case of Question \ref{qn: laminar imply co?}.

\begin{question} 
\label{qn: haken imply co?}
Are Haken hyperbolic rational homology $3$-spheres $CO$?
\end{question} 

Let $W$ be a Haken rational homology $3$-sphere. Then $W$ contains an incompressible surface which separates $W$ into $M_1$ and $M_2$, say. If the torsion subgroups of $H_1(M_1)$ and $H_1(M_2)$ are non-zero, then the proof of Proposition \ref{prop: t_i ne 0} shows that $W$ is $CO$. So in Question \ref{qn: haken imply co?} it is enough to consider the case where $M_1$ or $M_2$ is an integral homology handlebody.

Whether $CO$ implies laminar has a negative answer for Seifert fibre spaces. The obvious examples are the lens spaces. For more interesting examples, let $W$ be a Seifert fibre space whose base orbifold is the $2$-sphere with three cone points that is either Euclidean or hyperbolic. Then $\pi_1(W)$ is infinite, and hence is circularly orderable by Corollary \ref{cor: graph manifolds}. On the other hand, by Corollary 4 of \cite{Brittenham93}, if $W$ is laminar, then $W$ admits a horizontal foliation. Therefore, if the Seifert invariants of $W$ do not satisfy the conditions of \cite{JN85, Nai94}, then $W$ has no horizontal foliation. Therefore, $W$ is $CO$ and non-laminar.

Theorem \ref{thm: tor means co} (trivially) settles the toroidal case. Thus we are again left with the hyperbolic case.

\begin{question}
\label{qn: co imply laminar?}
Does $CO$ imply laminar for hyperbolic rational homology $3$-spheres?
\end{question}

The only non-laminar hyperbolic $3$-manifolds known are due to Fenley \cite{Fen07}, obtained by Dehn filling certain once-punctured torus bundles.

\begin{question} 
\label{qn: fenley's co?}
Are Fenley's non-laminar hyperbolic $3$-manifolds $CO$?
\end{question} 

If the answer to Question \ref{qn: fenley's co?} is yes, then $CO$ does not imply laminar for hyperbolic rational homology $3$-spheres. On the other hand, if the answer is no, one would find the first infinite family of non-$CO$ hyperbolic manifolds.

\subsection*{Organisation} Background material on circularly ordered groups, slope detection, and gluing of knot manifolds is contained in \S \ref{sec: background}. In \S \ref{sec: constructions}  we construct the two finite cyclic covers of a toroidal $3$-manifold used in the proof of Theorem  \ref{thm: cocyclic nls and lo}. Finally, we prove Theorem \ref{thm: tor means co}, Corollary \ref{cor: graph manifolds} and Theorem \ref{thm: cocyclic nls and lo} in \S \ref{sec: proofs}. 

\subsection*{Acknowledgements} The authors would like to thank Adam Clay for helpful discussions on the theory of circularly ordered groups.

\section{Background material}
\label{sec: background}

\subsection{Circularly orderable groups}
\begin{definition}
\label{def: co}
A {\it circular order} of a non-trivial group $G$ is a map $c :G^3 \to \{\pm 1,0\}$
satisfying:
\begin{enumerate}
\setlength\itemsep{0.3em}
    \item If $(g_1,g_2,g_3)\in G^3$, then $c(g_1,g_2,g_3)=0$
    if and only if $\{g_1,g_2,g_3\}$ are not all distinct;
    \item For all $g_1,g_2,g_3,g_4\in G$, we have
    \[
    c(g_1,g_2,g_3)-c(g_1,g_2,g_4)
    +c(g_1,g_3,g_4)-c(g_2,g_3,g_4)=0;
    \]
    \item For all $g,g_1,g_2,g_3\in G$, we have
 $c(g_1,g_2,g_3)=c(gg_1,gg_2,gg_3)$.
\end{enumerate}
A group is called {\it circularly orderable}, $CO$ for short, if it admits a circular order. Every left-order can be viewed as a circular order. Intuitively, this is similar to obtaining $S^1$ from $\mathbb{R}$ by adding one point at infinity. More precisely, given a left-order $<$ on $G$, one can define a circular order $c: G^3 \to \{\pm 1,0\}$ by setting $c(g_1,g_2,g_3)=\operatorname{sign}(\sigma)$ when $\{g_1,g_2,g_3\}$ are distinct and $g_{\sigma(1)} < g_{\sigma(2)} < g_{\sigma(3)}$, and defining $c(g_1,g_2,g_3)=0$ otherwise.
\end{definition}

The following classical proposition will be used frequently below. For a proof, see \cite[Proposition 2.3(2)]{BaCl24}.

\begin{proposition}
\label{prop: lexicographic}
If there is a short exact sequence $1 \to K \to G \to Q \to 1$ where $K$ is $LO$ and $Q$ is $CO$, then $G$ is $CO$.
\end{proposition}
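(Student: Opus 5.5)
We prove Proposition~\ref{prop: lexicographic} by building the lexicographic circular order on $G$. The idea is to compare elements first through their images in $Q$. Only when two or more of them have the same image do we fall back on the left-order of $K$.

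Fix a circular order $c_Q$ on $Q$ and a left-order $<$ on $K$, and write $\pi\colon G\to Q$. We may assume $Q$ is non-trivial; otherwise $G\cong K$ is $LO$, hence $CO$ by the remark after Definition~\ref{def: co}. For distinct $g_1,g_2,g_3\in G$, define $c(g_1,g_2,g_3)$ in three cases.
\begin{enumerate}
\item If $\pi(g_1),\pi(g_2),\pi(g_3)$ are distinct, set $c(g_1,g_2,g_3)=c_Q(\pi(g_1),\pi(g_2),\pi(g_3))$.
\item If all three images coincide, then $g_i=g_1k_i$ with $k_i\in K$ distinct. Set $c(g_1,g_2,g_3)$ to be the sign of the permutation that puts $k_1,k_2,k_3$ in increasing order. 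This is independent of the chosen coset representative because $<$ is left-invariant.
\item If exactly two images coincide, say $\pi(g_i)=\pi(g_j)\neq\pi(g_k)$, write $g_j=g_ik$ with $k\in K$. Cyclically reorder the triple so that it reads $(g_i,g_j,g_k)$, and set $c(g_i,g_j,g_k)=+1$ if $1<k$ and $-1$ otherwise; the value on the original triple is then obtained by cyclic invariance. Geometrically, we replace each point of $Q$ on the circle by a small ordered copy of $K$.
\end{enumerate}
Set $c=0$ on triples that are not distinct.

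Axiom (1) of Definition~\ref{def: co} holds by construction. Axiom (3), left-invariance, is immediate. Indeed, $\pi(gg_i)=\pi(g)\pi(g_i)$, $c_Q$ is left-invariant, and the $K$-comparisons are of elements $g_i^{-1}g_j$, which are unchanged by left multiplication.

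The main obstacle is the cocycle condition, axiom (2). Here I would avoid checking cases by hand and pass to an equivalent characterization. A function $c$ that is non-zero exactly on distinct triples and is invariant under cyclic permutations defines a cyclic order precisely when, for every finite subset $S$, the restriction $c|_S$ comes from some cyclic arrangement of $S$ on $S^1$. Given a finite $S\subset G$, place the distinct points of $\pi(S)$ on a circle according to $c_Q$. Then replace each such point $q$ by a small arc, and place the elements of $S\cap\pi^{-1}(q)$ along that arc in the order given by $<$ on $K$ (after translating by a representative). Reading off orientations of triples in this arrangement reproduces cases (1)--(3) exactly. Hence $c|_S$ is realised by points on a circle and satisfies the $4$-term identity, which involves only four elements. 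This proves axiom (2) and completes the argument.

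The one delicate point is that the arrangement in each arc must not depend on the coset representative. This holds by left-invariance of $<$, which is exactly what makes $c$ well defined.
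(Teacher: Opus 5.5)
Your proof is correct and is the standard lexicographic circular order that the proposition's label refers to; the paper gives no argument of its own and simply cites \cite[Proposition 2.3(2)]{BaCl24}. Reducing the cocycle identity to realising finite configurations on $S^1$ is legitimate, since the only outside input is the classical fact, applied to $c_Q$, that every finite circularly ordered set embeds order-preservingly in $S^1$.
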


The group $\mbox{Homeo}_+(S^1)$ is $CO$, and countable $CO$ groups can be characterised dynamically as precisely those countable groups isomorphic to non-trivial subgroups of $\mbox{Homeo}_+(S^1)$. For infinite $3$-manifold groups, a stronger result holds.

\begin{proposition}
\label{prop: inf image}
If $W$ is an orientable, irreducible $3$-manifold and there is a homomorphism $\rho: \pi_1(W) \to \mbox{Homeo}_+(S^1)$ with infinite image, then $W$ is $CO$. 
\end{proposition}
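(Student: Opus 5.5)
Let $G = \pi_1(W)$ and $H = \rho(G) \le \mathrm{Homeo}_+(S^1)$, which by hypothesis is infinite. The plan is to apply Proposition~\ref{prop: lexicographic} to the short exact sequence
\[
1 \to K \to G \to H \to 1, \qquad K = \ker\rho.
\]
Since $H$ is a non-trivial subgroup of $\mathrm{Homeo}_+(S^1)$, it is circularly orderable: pull back the circular order of $\mathrm{Homeo}_+(S^1)$, which is obtained by fixing a countable dense set of points and comparing orbits lexicographically. So $G$ is $CO$ as soon as $K$ is $LO$.

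If $K$ is trivial, then $G \cong H$ is already $CO$, so assume $K \ne 1$. The group $K$ is the fundamental group of the covering space $\widetilde W \to W$ corresponding to $K$. Because $H$ is infinite, this cover has infinite degree. We then invoke the Boyer--Rolfsen--Wiest criterion (\cite{BRW05}): if $M$ is compact, orientable and irreducible, and $\pi_1(M)$ surjects onto a left-orderable group, then $\pi_1(M)$ is $LO$. Its infinite-cover analogue says that every non-trivial finitely generated subgroup of $\pi_1(M)$ of infinite index has non-trivial homomorphisms to $\mathbb{Z}$. Concretely, a finitely generated subgroup $L \le K$ of infinite index in $G$ is the fundamental group of a compact core $C$ of the infinite cover, which is non-compact and irreducible since $W$ is irreducible. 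Such a core has non-empty boundary that is not all spheres, so $b_1(C) > 0$. Hence every non-trivial finitely generated subgroup of $K$ surjects onto $\mathbb{Z}$, i.e.\ $K$ is locally indicable, and by Burns--Hale $K$ is $LO$.

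The main obstacle is making the compact core step rigorous. We need to know that the infinite-degree cover is irreducible, which holds because the universal cover of an irreducible 3-manifold is irreducible (by the sphere theorem and Alexander). We also need that a compact irreducible non-closed core with boundary has positive first Betti number, by half-lives-half-dies, after removing sphere boundary components using irreducibility. Finally, the infinite index of $L$ must force the core's boundary to be non-empty: if $C$ were closed, then $\widetilde W = C$ would be compact, giving finite index. I would cite BRW's Theorem 1.1 argument directly for this step.
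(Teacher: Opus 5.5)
Your proposal is correct and follows essentially the same route as the paper. Both apply Proposition~\ref{prop: lexicographic} to $1 \to \ker\rho \to \pi_1(W) \to \rho(\pi_1(W)) \to 1$, with $\ker\rho$ either trivial or $LO$ because its non-trivial finitely generated subgroups have infinite index and hence positive first Betti number by the argument of \cite{BRW05}, followed by Burns--Hale; your version just spells out the compact-core argument that the paper cites. One minor quibble: the sphere theorem plus Alexander's theorem does not by itself show that covers of $W$ are irreducible, since you would first need to know the universal cover is $\mathbb{R}^3$. The standard justification is Meeks--Simon--Yau, or the sphere theorem together with the Poincar\'e conjecture, and this does not affect the argument.
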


\begin{proof}
A non-trivial  subgroup of infinite index of the fundamental group of an orientable, irreducible $3$-manifold has positive first Betti number (see the proof of \cite[Proposition 3.2]{BRW05}), so the kernel of $\rho$ is either trivial or left-orderable. The conclusion then follows from Proposition \ref{prop: lexicographic}. 
\end{proof}

\subsection{Slope detection and gluing}

The proof of Theorem \ref{thm: cocyclic nls and lo} uses the slope detection and gluing techniques developed in \cite{BC17}, \cite{HRW1} and, in particular, \cite{BGH26}. Roughly speaking, certain slopes on the boundary of a knot manifold are singled out (i.e. ``detected") using Heegaard Floer homology ($NLS$-detection), left-orders ($LO$-detection), or foliations ($CTF$-detection). We direct the reader to \cite{BGH26} for precise definitions. The importance of slope detection can be seen in the fact that a closed $3$-manifold $W$ will have property $* = NLS, LO$ or $CTF$ if it can be expressed as a union of two knot manifolds $W = M_1 \cup M_2$ in such a way that the gluing map matches a $*$-detected slope on $\partial M_1$ with a $*$-detected slope on $\partial M_2$. Multislopes on the boundary of manifolds with multiple boundary components, all tori, can also be $LO$-, $NLS$-, and $CTF$-detected, and there are corresponding generalisations of the gluing theorems. As these results will be invoked formally in our arguments, we content ourselves with stating those parts of the theory that we need and refer the reader to \cite{BGH26} for the details. 

The detection result we will use is a special case of Theorem 1.3 of \cite{BGH26}.

\begin{proposition} [\cite{BGH26}]
\label{prop: meridional detn} 
Let $M \not \cong S^1 \times D^2$ be an irreducible rational homology solid torus whose longitude  $\lambda_M$ is integrally null-homologous. 
\begin{enumerate}[leftmargin=*] 
\setlength\itemsep{0.5em}
\item[{\rm (1)}]  Each  rational slope of distance $1$ from $\lambda_M$ is NLS-detected. 
\item[{\rm (2)}] If $M$ is an integer homology solid torus, then each rational slope of distance $1$ from $\lambda_M$ is $LO$-detected.  
\item[{\rm (3)}] If $M$ fibres over the circle, then each rational slope of distance $1$   from 
$\lambda_M$ is $CTF$-detected.
\end{enumerate}
\end{proposition}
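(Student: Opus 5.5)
This proposition is quoted as a special case of Theorem~1.3 of \cite{BGH26}. The plan is therefore to check that $M$ satisfies that theorem's hypotheses, and then to say how I would prove each of the three parts directly using the three detection theories. The hypothesis doing the work is that $\lambda_M$ is integrally null-homologous. It gives a homomorphism $\varphi\colon H_1(M)\to \mathbb{Z}$ that kills $\lambda_M$ and sends some dual class $\mu$ with $\Delta(\mu,\lambda_M)=1$ to a generator. Every rational slope $\alpha$ at distance $1$ from $\lambda_M$ has the form $\alpha=\mu+n\lambda_M$ for some $n\in\mathbb{Z}$. So the three statements are uniform in $n$, and the argument should be insensitive to twisting along $\lambda_M$.

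\emph{Part (1), NLS-detection.} I would use the immersed-curve description of $\widehat{HF}(M)$ from \cite{HRW1}. In that language, a slope is NLS-detected when a line of that slope, lifted to the covering plane $\mathbb{R}^2\setminus\mathbb{Z}^2$, can be isotoped to pin a component of the curve. Because $\lambda_M$ is integrally null-homologous, the curve lifts to a single $\lambda_M$-periodic curve at the height of the lattice points. Because $M\not\cong S^1\times D^2$, this curve is not homotopic to a straight line. So in each period it must wrap around some lattice point, and a line of any slope of distance $1$ from $\lambda_M$ meets the curve essentially there. I expect the only delicate point to be the exceptional cases where the curve could be a line with trivial local systems, and the hypothesis $M\not\cong S^1\times D^2$ rules these out.

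\emph{Part (2), LO-detection.} Here $H_1(M)\cong\mathbb{Z}$, and $\ker\varphi$ is locally indicable. Indeed, each finitely generated subgroup has infinite index in $\pi_1$ of a compact $3$-manifold with boundary, so it has positive first Betti number, as in the proof of Proposition~\ref{prop: inf image}. The lexicographic order coming from $1\to\ker\varphi\to\pi_1(M)\to\mathbb{Z}\to 1$ restricts on $\pi_1(\partial M)$ to an order in which $\langle\lambda_M\rangle$ is convex. That detects $\lambda_M$, not $\alpha$. To detect $\alpha$ I would instead take the associated dynamical realization on $\mathbb{R}$. I would then try to deform it, by a family of actions that are conjugate on $\ker\varphi$ but in which the generator lifting $\mu$ acts with varying translation behaviour, until the element $\mu+n\lambda_M$ acquires a fixed point while $\lambda_M$ does not. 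Closedness of the set of detected slopes under such limits (from \cite{BC17}) would then give detection of every $\mu+n\lambda_M$. This step is the main obstacle. If the direct deformation fails, I would fall back on the gluing characterization: build a partner manifold, for example a twisted $I$-bundle or a cable space, whose LO-detected slope matches $\alpha$, and argue that the glued manifold is $LO$ exactly when $\alpha$ is detected.

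\emph{Part (3), CTF-detection.} This is the most geometric part. If $M$ fibres with fibre $F$, then $\partial F$ is $\lambda_M$, since the longitude is null-homologous. Spinning the fibration near $\partial M$ in either direction, or applying the Roberts / Honda--Kazez--Mati\'c construction for fibred manifolds with connected binding-type boundary, gives taut foliations transverse to $\partial M$. These restrict to linear foliations of slope $\mu+n\lambda_M$ for every integer $n$, since monodromy twisting can be adjusted by $\pm1$ Dehn twists along $\partial F$. In the sense of \cite{BGH26}, that is exactly CTF-detection of the distance-$1$ slopes.
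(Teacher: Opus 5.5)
The paper gives no argument of its own here. The statement is quoted as a special case of \cite[Theorem 1.3]{BGH26}, so your first sentence is already everything the paper does. The problem is the direct proofs you then sketch, which have genuine gaps in (2) and (3).

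In (2), take the dynamical realisation of the lexicographic order from $1\to\ker\varphi\to\pi_1(M)\to\mathbb Z\to 1$. There $\ker\varphi$ is convex, so it preserves a bounded interval, and each of its elements, $\lambda_M$ included, fixes that interval's endpoints. Any family of actions conjugate to this one on $\ker\varphi$ therefore keeps a fixed point for $\lambda_M$, so the configuration you aim for cannot be reached. Moreover, this action does not commute with integer translations, so there is no translation number of $\mu$ to vary. The partner-manifold fallback only restates the problem, since you would still have to prove the glued manifold is $LO$.

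The missing input is an action of a different origin. You need a representation $\rho\colon\pi_1(M)\to\mbox{Homeo}_+(S^1)$ with $\rho(\pi_1(\partial M))$ fixing a point. It lifts to $\tilde\rho\colon\pi_1(M)\to\mbox{Homeo}_{\mathbb Z}(\mathbb R)$ because $H^2(M)=0$, and the translation number satisfies $\tau(\tilde\rho(\lambda_M))=2g(M)-1\neq 0$. This is \cite[Theorem 1.5]{BGH26}, and it is where foliation theory and the genus of $M$ enter. Given it, your twisting idea works. Replace $\tilde\rho(\gamma)$ by $x\mapsto\tilde\rho(\gamma)(x)+m\varphi(\gamma)$; this shifts $\tau(\mu)$ by $m$ and leaves $\tau(\lambda_M)$ alone, so you can arrange $\tau(\mu+n\lambda_M)=0$. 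Since $\pi_1(\partial M)$ moves a lift of the fixed point by integers, $\mu+n\lambda_M$ then fixes that lift while $\lambda_M$ has no fixed point. This is the mechanism in the proof of Lemma~\ref{lemma: lo detd} and \cite[Proposition 6.10]{BGH26}, and no limiting argument is needed.

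In (3) your central claim is false. The trefoil exterior satisfies every hypothesis. Yet no co-orientable taut foliation transverse to its boundary can restrict to a linear foliation of slope $\mu$, because capping off with meridian disks would give a taut foliation of $M(\mu)=S^3$. The same holds for whichever of $\mu\pm\lambda_M$ fills to the Poincar\'e homology sphere. Roberts-type constructions realise an interval of slopes determined by the monodromy. Composing the monodromy with boundary Dehn twists yields no new foliations: it only reparametrises $\partial M$, moving the section curve and the realised interval together. So slopes such as $\mu$ must be handled through the precise notion of $CTF$-detection in \cite{BGH26}. That notion is necessarily weaker than ``realised by a linear boundary foliation'' precisely so as to include them, and your sketch does not engage with it.

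Part (1) is closer to workable but imprecise in three ways:
\begin{itemize}
\item $\widehat{HF}(M)$ splits over $\mathrm{Spin}^c(M)$, which has $|T_1(M)|$ elements, and null-homology of $\lambda_M$ does not kill torsion, so there is no single curve.
\item ``$M\not\cong S^1\times D^2$ implies the curve is not a line'' needs irreducibility together with a genus-detection theorem for knot Floer homology.
\item When $M(\alpha)$ is an $L$-space, as for the meridian of a knot in $S^3$, the line meets the curves minimally. There $\alpha$ is detected because it is an endpoint of the $L$-space interval, not because of an essential intersection.
\end{itemize}
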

It is expected that the conclusions of parts (2) and (3) of this theorem hold without their added assumptions.

Finally, the gluing theorem we need is a special case of Theorem 7.6 of \cite{BGH26}.

\begin{proposition}[\cite{BGH26}] 
\label{prop: general * gluing}
Let $W = M_0 \cup M_1 \cup \cdots \cup M_n$ be an irreducible rational homology $3$-sphere expressed as a union of submanifolds $M_0, M_1, M_2, \ldots, M_{n}$. Here, $M_1, M_2, \ldots, M_{n}$ are knot manifolds and $M_0$ is a compact, connected, orientable $3$-manifold whose boundary consists of $n$ incompressible tori $T_1, T_2, \ldots , T_n$, where $M_0 \cap M_i = T_i$ for $1 \leq i \leq n$. Fix $\ast \in \{NLS, LO, CTF\}$ and suppose that $(\alpha_1, \alpha_2, \ldots, \alpha_n)$ is a $\ast$-detected rational multislope on $\partial M_0$. If $\alpha_i$ is $\ast$-detected in $M_i$ for $1 \leq i \leq n$, then $W$ has property $\ast$. 
\end{proposition}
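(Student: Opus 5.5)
The plan is to deduce this statement from the general gluing theorem of \cite{BGH26} (Theorem 7.6) rather than to reprove it from scratch. We regard $W$ as obtained by gluing two compact, orientable manifolds along their entire boundaries: $M_0$ on one side, and the disjoint union $N = M_1 \sqcup M_2 \sqcup \cdots \sqcup M_n$ on the other. Both have boundary $T_1 \sqcup \cdots \sqcup T_n$, and these tori are incompressible on both sides.

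\textbf{Step 1: detection on $N$.} The multislope $(\alpha_1, \ldots, \alpha_n)$ on $\partial N$ should be $\ast$-detected in $N$ exactly when each $\alpha_i$ is $\ast$-detected in $M_i$. For each value of $\ast$ I would check that the relevant data can be chosen independently on each component and then combined.
\begin{enumerate}
\item[(a)] For $LO$: a left-order on a free product, or componentwise orders on the $\pi_1(M_i)$, whose restriction to each peripheral $\mathbb{Z}^2$ detects $\alpha_i$.
\item[(b)] For $CTF$: a disjoint union of foliations, each meeting $T_i$ in a linear foliation of slope $\alpha_i$.
\item[(c)] For $NLS$: bordered Floer invariants of a disjoint union, which should split as a tensor product.
\end{enumerate}

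\textbf{Step 2: apply Theorem 7.6.} Once Step 1 is in place, \cite[Theorem 7.6]{BGH26} applies directly, since the gluing map identifies the $\ast$-detected multislope on $\partial M_0$ with the $\ast$-detected multislope on $\partial N$. The hypotheses that $W$ is an irreducible rational homology sphere and that the $T_i$ are incompressible are exactly what that theorem needs. Incompressibility guarantees that the peripheral subgroups inject and that the relevant $\pi_1$ is an amalgam. The rational homology sphere condition ensures rational longitudes are well defined and the multislope conventions match.

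\textbf{Alternative: direct argument for each $\ast$.}
\begin{enumerate}
\item[(a)] For $LO$, I would use the Bludov--Glass criterion for orderability of graphs of groups. The detecting orders on $\pi_1(M_0)$ and on the $\pi_1(M_i)$ restrict on each $\pi_1(T_i) \cong \mathbb{Z}^2$ to orders determined by the line of slope $\alpha_i$. These restrictions can be made to agree after conjugating and reversing as needed, with the families of detecting orders closed under these operations.
\item[(b)] For $CTF$, I would glue the foliations along the tori after isotoping the boundary linear foliations to coincide. When $\alpha_i$ is rational I would first thicken to an annulus-times-$I$ product so that the foliations match.
\item[(c)] For $NLS$, I would use the Hanselman--Rasmussen--Watson immersed curve pairing, iterated one torus at a time.
\end{enumerate}

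\textbf{Main obstacle.} I expect the main difficulty to be the multislope bookkeeping when the $\alpha_i$ are glued successively. After gluing some of the $M_i$ to $M_0$, one must know that the remaining slopes are still detected in the partially glued manifold. This is precisely why the paper uses the multislope formulation of \cite{BGH26} rather than an iteration of the two-piece gluing theorem. So in practice I would rely on that theorem and only verify the disjoint-union reduction of Step 1.
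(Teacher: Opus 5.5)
Your proposal takes the same route as the paper, which gives no argument of its own and simply records this statement as a special case of \cite[Theorem 7.6]{BGH26}. The one adjustment is that your Step 1 is unnecessary, and also not well posed: the detection notions of \cite{BGH26} are formulated for connected manifolds, and, for example, there is no single fundamental group of the disjoint union $N$ to order. As the paper's citation for exactly this configuration indicates, that theorem already applies directly to the pieces $M_0, M_1, \ldots, M_n$ glued along $T_1, \ldots, T_n$, with the $\ast$-detected multislope on $\partial M_0$ matched to the $\ast$-detected slopes $\alpha_i$ on the $\partial M_i$.
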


\section{Construction of the cyclic covers}
\label{sec: constructions}

In this section we take $W$ to be an irreducible rational homology $3$-sphere which can be expressed as the union of two knot manifolds $M_1, M_2$ along their boundaries: 
$$W = M_1 \cup M_2.$$ 
Our goal is to construct the two finite cyclic covers of $W$ used in the proof of Theorem  \ref{thm: cocyclic nls and lo}. Here is our first construction.

\begin{proposition}
\label{prop: t_i ne 0}
Suppose that $W = M_1 \cup_T M_2$ is an irreducible toroidal rational homology $3$-sphere, where $M_1, M_2$ are knot manifolds with boundary $T$. If the torsion subgroups of $H_1(M_1)$ and $H_1(M_2)$ are non-zero, then there is a finite cyclic cover $\widetilde W \to W$, where $\widetilde W$ has positive first Betti number. In particular, $\widetilde W$ is $NLS, LO$, and $CTF$.  
\end{proposition}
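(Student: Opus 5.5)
The plan is to find a homomorphism $\varphi\colon H_1(W)\to \mathbb Z/n$ that is surjective but kills the image of $H_1(T)$. The preimage of $T$ in the associated cyclic cover $\widetilde W$ is then $n$ disjoint tori. If there are enough of them compared with the number of components of the preimages of $M_1$ and $M_2$, the dual graph of this decomposition of $\widetilde W$ has a cycle. A cycle in the dual graph gives a surjection $\pi_1(\widetilde W)\to\mathbb Z$, so $b_1(\widetilde W)>0$.

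\emph{Step 1: homological input.} Let $T_i$ be the torsion subgroup of $H_1(M_i)$. Since $M_i$ is a rational homology solid torus, $H_2(M_i)=0$ and $H_1(M_i,T)$ is finite. By Lefschetz duality and universal coefficients, $H_1(M_i,T)\cong H^2(M_i)\cong \operatorname{Ext}(H_1(M_i),\mathbb Z)\cong T_i$. The map $H_0(T)\to H_0(M_i)$ is injective, so the long exact sequence of the pair gives
\[
H_1(M_i)/\operatorname{im}H_1(T)\cong H_1(M_i,T)\cong T_i\neq 0 .
\]
Choose primes $p_1,p_2$ with $p_i$ dividing $|T_i|$, and surjections $\psi_i\colon H_1(M_i)\to\mathbb Z/p_i$ that vanish on $\operatorname{im}H_1(T)$. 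Mayer--Vietoris gives $H_1(W)=(H_1(M_1)\oplus H_1(M_2))/\{(i_{1*}x,-i_{2*}x)\}$, so $(\psi_1,\psi_2)$ descends to a surjection $\Phi\colon H_1(W)\to \mathbb Z/p_1\oplus\mathbb Z/p_2$ that kills $H_1(T)$. If $p_1\ne p_2$, the target is cyclic of order $n=p_1p_2$ and we set $\varphi=\Phi$. If $p_1=p_2=p$, we compose with the sum map to $\mathbb Z/p$; this is still surjective on each $H_1(M_i)$, and we take $n=p$.

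\emph{Step 2: counting components.} Let $\widetilde W\to W$ be the $n$-fold cyclic cover determined by $\varphi$; it is connected because $\varphi$ is surjective. Since $\varphi|_{H_1(T)}=0$, the preimage of $T$ consists of $n$ tori. The preimage of $M_i$ has $d_i=[\mathbb Z/n:\varphi(H_1(M_i))]$ components. When $p_1\neq p_2$ we have $d_1=p_2$ and $d_2=p_1$; when $p_1=p_2$ we have $d_1=d_2=1$. The dual graph $\Gamma$ is connected, with $d_1+d_2$ vertices and $n$ edges, so its first Betti number is $n-d_1-d_2+1$. This equals $p_1p_2-p_1-p_2+1=(p_1-1)(p_2-1)\ge 1$ in the first case and $p-1\ge1$ in the second. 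Collapsing the vertex spaces gives a surjection $\pi_1(\widetilde W)\to\pi_1(\Gamma)$ onto a nontrivial free group, hence $b_1(\widetilde W)>0$.

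\emph{Step 3: conclusion.} A finite cover of the irreducible manifold $W$ is irreducible. By the results cited in the introduction (\cite{BRW05}, \cite{Gab83}), positive first Betti number then gives $LO$ and $CTF$, and $NLS$ is immediate since an $L$-space is a rational homology sphere.

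I expect the main obstacle to be Step 1, namely identifying $H_1(M_i)/\operatorname{im}H_1(T)$ with $T_i$. This is exactly where the hypothesis that the torsion is nonzero enters. The rest of the argument is a count of the components of the preimages of $T$ and of the $M_i$.
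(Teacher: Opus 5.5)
Your proposal is correct and takes essentially the same route as the paper. You build a surjection $H_1(W)\to\mathbb Z/n$ that kills $H_1(T)$ from $H_1(M_i,T)\cong T_i$, split into a coprime case $\mathbb Z/p_1p_2$ and a common-quotient case $\mathbb Z/p$, and count components to get $b_1\ge (p_1-1)(p_2-1)$ or $p-1$. The differences are cosmetic: you use primes where the paper uses cyclic summands of coprime orders, and you do the final count via the dual graph rather than the end terms of the reduced Mayer--Vietoris sequence.
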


\begin{proof}
As  $W$ is a rational homology $3$-sphere, $M_1$ and $M_2$ are rational homology solid tori. Then $H_1(M_i) \cong \mathbb Z \oplus T_1(M_i)$, where $T_1(M_i)$ is the torsion subgroup of $H_1(M_i)$, and $H_2(M_i) = 0$. Hence, 
$$H_1(W, T) \cong H_1(M_1, T) \oplus H_1(M_2, T) \cong H^2(M_1) \oplus H^2(M_2) \cong T_1(M_1) \oplus T_1(M_2)$$
Thus, if there are an integer $n \geq 2$ and homomorphisms $\varphi_1: T_1(M_1) \to \mathbb Z/n$ and $\varphi_2: T_1(M_2) \to \mathbb Z/n$, we obtain a homomorphism $H_1(W) \to \mathbb Z/n$ via the composition $H_1(W) \to H_1(W, T) \cong T_1(M_1) \oplus T_1(M_2) \xrightarrow{\; \varphi_1 + \varphi_2 \;} \mathbb Z/n$. Note that any such homomorphism vanishes on the image of $H_1(T)$ in $H_1(W)$.

First suppose that $T_1(M_1)$ and $T_1(M_2)$ have cyclic summands with coprime orders $r_1 \geq 2$ and $r_2 \geq 2$ respectively. Applying the construction above to the compositions $T_1(M_1) \twoheadrightarrow \mathbb Z/r_1 \rightarrowtail \mathbb Z/r_1r_2$ and $T_1(M_2) \twoheadrightarrow \mathbb Z/r_2 \rightarrowtail \mathbb Z/r_1r_2$, we obtain an epimorphism $H_1(W) \twoheadrightarrow \mathbb Z/r_1r_2$. The associated cover $\widetilde W$ splits into the inverse images $\widetilde M_1$ of $M_1$ and $\widetilde M_2$ of $M_2$ with $r_2$ and $r_1$ components respectively. Further, the inverse image $\widetilde T$ of $T$ has $r_1r_2$ components, so consideration of the end terms of the Mayer-Vietoris sequence in reduced homology of the union $\widetilde W = \widetilde M_1 \cup_{\widetilde  T} \widetilde  M_2$, 
$$H_1(\widetilde W) \xrightarrow{\; \partial \;} \widetilde{H}_0(\widetilde  T) \to \widetilde{H}_0(\widetilde M_1) \oplus \widetilde{H}_0(\widetilde   M_2) \to 0,$$
shows that $b_1(\widetilde W) \geq \mbox{rank}(\partial) = \mbox{rank}(\widetilde{H}_0(\widetilde  T)) -(\mbox{rank}(\widetilde{H}_0(\widetilde M_1)) +  \mbox{rank}(\widetilde{H}_0(\widetilde M_2))) = (r_1r_2 - 1) - ((r_1-1) + (r_2-1)) = (r_1-1)(r_2-1) \geq 2$.

 On the other hand, if $T_1(M_1)$ and $T_1(M_2)$ do not have cyclic summands with coprime orders, there are an integer $n \geq 2$ and epimorphisms $\varphi_1: T_1(M_1) \twoheadrightarrow \mathbb Z/n$ and $\varphi_2: T_1(M_2) \twoheadrightarrow \mathbb Z/n$ which piece together to give an epimorphism $H_1(W) \twoheadrightarrow \mathbb Z/n$. Then $\widetilde M_1, \widetilde M_2$, and $\widetilde T$, as above, have one, one, and $n$ components respectively. Arguing as in the previous case then shows that $b_1(\widetilde W)\geq 1$, which completes the proof. 
\end{proof}

For our second construction, let $\lambda_i \in H_1(T)$ denote the rational longitude of $M_i$ and let $d_i \geq 1$ be its order in $H_1(M_i)$. Set
$$\bar d_1 = d_1/\gcd(d_1, d_2), \; \bar d_2 = d_2/\gcd(d_1, d_2)$$
so that if $l = \mbox{lcm}(d_1, d_2)$, then  
$$l = \bar d_1 d_2 = d_1 \bar d_2$$ 
Finally, set 
$$s = \Delta(\lambda_1, \lambda_2),$$
where $\Delta(\cdot, \cdot)$ denotes the distance between rational slopes. Since $W$ is a rational homology $3$-sphere, $s \geq 1$. 

\begin{proposition}
\label{prop: ls}
Suppose that $W = M_1 \cup_T M_2$ is an irreducible toroidal rational homology $3$-sphere, where $M_1, M_2$ are knot manifolds with boundary $T$. Then there is a degree $ls$ cyclic cover $p: \widetilde W_0 \to W$ with the following properties $:$

$(1)$ The inverse images of $M_1, M_2$, and $T$ in $\widetilde W_0$ have $\bar d_2, \bar d_1$, and $l$ components respectively.
    
$(2)$ If $d_1, d_2 > 1$, then $b_1(\widetilde W_0) \geq 1$ and therefore $\widetilde W_0$ is $LO, NLS$ and $CTF$.
    
$(3)$ If $\widetilde T_j$ is a component of $p^{-1}(T)$, then $\lambda_1, \lambda_2$ lift to classes $\tilde \lambda_1, \tilde \lambda_2 \in H_1(\widetilde T_j)$ such that $\Delta(\tilde \lambda_1, \tilde \lambda_2) = 1$.

\end{proposition}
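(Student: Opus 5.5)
The plan is to build $\widetilde W_0$ from an explicit epimorphism $\varphi: H_1(W) \to \mathbb Z/ls$. By Mayer--Vietoris, $H_1(W)$ is the quotient of $H_1(M_1)\oplus H_1(M_2)$ by the image of $H_1(T)$. So it suffices to give homomorphisms $\varphi_i: H_1(M_i)\to \mathbb Z/ls$ that agree on $H_1(T)$ and together generate $\mathbb Z/ls$.

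\textbf{Choice of the restriction to $T$.} Property (3) dictates what $\varphi|_{H_1(T)}$ must be. If $\tilde T_j$ covers $T$ with group of deck transformations $H_1(T)/K$, then $\lambda_1,\lambda_2$ lift exactly when they lie in $K$. Moreover $\Delta$ measured in $K$ equals $s/[H_1(T):K]$. Hence we want $K=\ker(\varphi|_{H_1(T)})=\langle \lambda_1,\lambda_2\rangle$, which has index $s$. Then $\varphi(H_1(T))$ is the subgroup of order $s$ in $\mathbb Z/ls$, namely $l\mathbb Z/ls$, and $T$ has $ls/s=l$ preimage components.

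\textbf{Construction of $\varphi_i$.} Let $f_i: H_1(M_i)\to \mathbb Z$ be the projection onto $H_1(M_i)/\mathrm{tors}$. Its restriction to $H_1(T)$ kills $\lambda_i$, so it has the form $x\mapsto c_i(\lambda_i\cdot x)$ up to sign. I will use the standard fact that $c_i=d_i$, i.e. the index of the image of $H_1(T)$ in $H_1(M_i)/\mathrm{tors}$ equals the order of $\lambda_i$; checking this normalization carefully is the main technical point. Set
\[
\varphi_1 = \bar d_2\, f_1 \bmod ls, \qquad \varphi_2 = u\,\bar d_1\, f_2 \bmod ls,
\]
where $u$ is a unit mod $ls$ still to be chosen. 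On $T$ these give $x\mapsto l(\lambda_1\cdot x)$ and $x\mapsto u\,l(\lambda_2\cdot x)$ mod $ls$. Since $\lambda_1\cdot\lambda_2=\pm s$, both maps kill $\lambda_1$ and $\lambda_2$. Each of $x\mapsto \lambda_i\cdot x \bmod s$ induces an isomorphism $H_1(T)/\langle\lambda_1,\lambda_2\rangle\cong \mathbb Z/s$, so the two differ by a unit mod $s$. Lift that unit to a unit $u$ mod $ls$; this is always possible. With this $u$ the two restrictions agree, and $\varphi$ is defined. Its image contains $\bar d_2$ and $u\bar d_1$, which are coprime, so $\varphi$ is onto.

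\textbf{Verifying (1) and (2).} The image of $\varphi_1$ is $\bar d_2\mathbb Z/ls$, of index $\bar d_2$, so $M_1$ has $\bar d_2$ preimage components. Likewise $M_2$ has $\bar d_1$ components, and $T$ has $l$ components as computed above. For (2), repeat the reduced Mayer--Vietoris rank count from Proposition \ref{prop: t_i ne 0}:
\[
b_1(\widetilde W_0)\ \ge\ (l-1)-(\bar d_1-1)-(\bar d_2-1)\ =\ l-\bar d_1-\bar d_2+1.
\]
If $d_1,d_2\ge 2$, then $l=\bar d_1 d_2\ge 2\bar d_1$ and $l=d_1\bar d_2\ge 2\bar d_2$, so $l\ge \bar d_1+\bar d_2$ and $b_1\ge 1$. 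Positive $b_1$ then gives $LO$, $NLS$ and $CTF$ as before. Property (3) holds by the choice of $K$.
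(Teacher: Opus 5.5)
Your proposal is correct and is essentially the paper's proof. The homomorphism you glue via Mayer--Vietoris from $\bar d_2 f_1$ and $u\bar d_1 f_2$ is, up to an automorphism of $\mathbb{Z}/ls$, the Poincar\'e dual of the paper's mod-$ls$ cycle $C = a\bar d_2F_1 - \bar d_1F_2$: your unit $u$ plays the role of the paper's $a$ with $\gcd(a,ls)=1$, and your normalization $c_i=d_i$ is the paper's use of surfaces $F_i$ with $\partial F_i = d_i\lambda_i$ dual to a generator of $H^1(M_i)$. Your component counts and Mayer--Vietoris estimate then coincide with the paper's, and in (3) you read off $\Delta(\tilde\lambda_1,\tilde\lambda_2)=1$ from $\ker(\varphi|_{H_1(T)})=\langle\lambda_1,\lambda_2\rangle\cong H_1(\widetilde T_j)$ rather than counting lifted intersection points, which is a slightly cleaner way to the same conclusion.
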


\begin{proof} 
There are properly embedded (connected) essential surfaces $F_1, F_2$ in $M_1, M_2$ whose boundaries have $d_1, d_2$ components and represent $d_1 \lambda_1, d_2 \lambda_2 \in H_1(T)$. Since each $F_i$ is non-separating, there is a class $\alpha_i \in H_1(M_i)$ such that $\alpha_i \cdot [F_i] = 1$ for $i = 1, 2$.

Choose a dual class $\mu_1$ to $\lambda_1$ on $T$. Since $\lambda_2$ is primitive and $\Delta(\lambda_1, \lambda_2) = s$, (up to sign) we can write $\lambda_2 = a\lambda_1 + s\mu_1$, where $\gcd(a, s) = 1$. Hence, up to replacing $\mu_1$ by $\mu_1 + k \lambda_1$ for some integer $k$, we can suppose that $\gcd(a, ls) = 1$.

Consider the 2-chain $C = a\bar d_2F_1 - \bar d_1F_2$ in $W$ whose boundary is
$$a\bar d_2d_1 \lambda_1 - \bar d_1d_2 \lambda_2 = al \lambda_1 - l(a\lambda_1 + s\mu_1) = -ls \mu_1$$

Thus $C$ is a (mod $ls$) 2-cycle, so represents a class $\xi \in H_2(W; \mathbb Z/ls)$. Its Poincar\'e dual in $H^1(W; \mathbb Z/ls)$ can be thought of as a homomorphism 
$f: H_1(W) \to \mathbb Z/ls$ represented topologically as the (mod $ls$) intersection with $\xi$:
$$f(\alpha) \equiv \alpha \cdot \xi \; \mbox{ (mod $ls$)}$$
By construction,
\vspace{-.2cm}
\begin{enumerate}

\item[{\rm (a)}] $f$ vanishes on the images of $H_1(F_1)$ and $H_1(F_2)$ in $H_1(W)$. In particular, 
$$f(\lambda_1) \equiv f(\lambda_2) \equiv 0 \mbox{ (mod $ls$)};$$

\item[{\rm (b)}] $f(\alpha_1) \equiv a \bar d_2$ and  $f(\alpha_2) \equiv - \bar d_1$ (mod $ls$); 

\vspace{.2cm} \item[{\rm (c)}] $f(\mu_1) \equiv al$ (mod $ls$). 

\end{enumerate}

Hence as $\gcd(a, ls) = 1$ and $\gcd(\bar d_1, \bar d_2) = 1$, we have $\gcd(\bar d_1, a\bar d_2, ls) = 1$. It then follows from (b) above that $f$ is an epimorphism. Let
$\widetilde W_0 \to W$ be the associated degree $ls$ cyclic cover.  

As each $M_i$ is a rational homology solid torus, $H_1(M_i)$ decomposes as $\mathbb Z \oplus T_1(M_i)$, where $\alpha_i$ generates the $\mathbb Z$ factor and $T_1(M_i)$ is torsion. By the topological description of $f$, the value of $f$ on the class of a $1$-cycle lying in $M_1$ is $a \bar d_2$ times its intersection with $F_1$, and its value on the class of a $1$-cycle lying in $M_2$ is $-\bar d_1$ times its intersection with $F_2$. As such, it vanishes on the images of $T_1(M_1)$ and $T_1(M_2)$. Thus, by (b) above,
$$f(\mbox{image}(H_1(M_1) \to H_1(W))) =\langle a \bar d_2 \rangle =\langle  \bar d_2 \rangle\leq \mathbb Z/ls$$
and
$$f(\mbox{image}(H_1(M_2) \to H_1(W))) = \langle \bar d_1 \rangle \leq \mathbb Z/ls $$
Since $\lambda_1$ and $\mu_1$ generate $H_1(T)$, (a) and (c) imply that 
$$f(\mbox{image}(H_1(T) \to H_1(W))) = \langle al \rangle = \langle l \rangle \leq \mathbb Z/ls$$
It follows that if $\widetilde T, \widetilde M_1, \widetilde M_2$ are the inverse images of $T, M_1, M_2$ in $\widetilde W_0$, then $\widetilde T$ has $l$ components, $\widetilde M_1$ has $\bar d_2$ components, and $\widetilde M_2$ has $\bar d_1$ components. This proves assertion (1) of the proposition.

For (2), assume that $d_1, d_2 > 1$. Then the identities $l = d_1\bar d_2 = \bar d_1 d_2$ show that $\bar d_1, \bar d_2 \leq l/2$. Now use the end terms of the Mayer-Vietoris sequence in reduced homology of the union $\widetilde W_0 = \widetilde M_1 \cup_{\widetilde  T} \widetilde  M_2$, as in the proof of Proposition \ref{prop: t_i ne 0}, to see that $b_1(\widetilde W_0) \geq (l + 1) - (\bar d_1 + \bar d_2) \geq 1$, thus proving (2).  

Finally consider (3). Represent $\lambda_1, \lambda_2$ by simple closed curves $c_1, c_2 \subset T$ which intersect transversely in $s$ points. By (1), we can index the components of $\widetilde  T$ by $\widetilde  T_1, \widetilde  T_2, \ldots , \widetilde  T_l$. Then the restriction of $p$ to each $\widetilde T_j$ is a degree $s$ cover $\widetilde T_j \to T$, so that the $s$ points of intersection between $c_1$ and $c_2$ lift to $s^2$ points of intersection between their inverse images in $\widetilde T_j$. By (a) above, $c_i$ lifts to $s$ disjoint copies of a simple closed curve on $\widetilde T_j$, each representing a class $\tilde \lambda_i$. A simple counting argument then shows that $\Delta(\tilde \lambda_1, \tilde \lambda_2) = 1$. This completes the proof. 
\end{proof}

\section{Proofs of the main results}
\label{sec: proofs} 

We begin by deducing Theorem \ref{thm: tor means co} and Corollary \ref{cor: graph manifolds} from Theorem \ref{thm: cocyclic nls and lo}.

\begin{proof}[Proof of Theorem \ref{thm: tor means co}]
Let $W$ be a closed, orientable, irreducible, toroidal $3$-manifold. Theorem \ref{thm: cocyclic nls and lo} implies that there is a finite degree cyclic cover $\widetilde W \to W$ with $\widetilde W$ $LO$. Then there are a positive integer $n$ and short exact sequence $1 \to \pi_1(\widetilde W) \to \pi_1(W)  \to \mathbb Z/n \to 1$, which implies that $W$ is $CO$ (Proposition \ref{prop: lexicographic}). 
\end{proof}

\begin{proof}[Proof of Corollary \ref{cor: graph manifolds}]
The only reducible graph manifolds are $S^1 \times S^2$ and $P^3 \# P^3$, and both are $CO$; this is obvious in the first case and follows from the exact sequence $1 \to \mathbb Z \to \mathbb Z/2 * \mathbb Z/2 \to \mathbb Z/2 \to 1$ in the second. Thus we can assume that $W$ is irreducible and, as above, that $W$ is a rational homology $3$-sphere. Theorem \ref{thm: tor means co} deals with the case that $W$ is toroidal. Otherwise, $W$ is an irreducible Seifert fibre space without essential tori. As such, it admits a Seifert structure whose base orbifold $\mathcal{B}$ is the $2$-sphere with at most three cone points. 

Since a finite group is $CO$ if and only if it is cyclic, we may suppose that $\pi_1(W)$ is infinite. Then $\mathcal{B}$ is either Euclidean or hyperbolic. In the first case, it  is either $S^2(3,3,3), S^2(2,4,4)$, or $S^2(2, 3, 6)$, and the reader will verify that for $n = 3, 4, 6$ respectively, there is a surjection $\pi_1(\mathcal{B}) \to \mathbb Z/n$ which is injective on finite cyclic subgroups. Hence, there is a $n$-fold cyclic cover $S^1 \times S^1 \to \mathcal{B}$ which lifts to an $n$-fold cyclic cover $\widetilde W \to W$, where $\widetilde W$, being a circle bundle over the torus, is $LO$. Thus $W$ is CO. 

On the other hand, if $\mathcal{B}$ is hyperbolic, its fundamental group is isomorphic to a subgroup of $PSL(2, \mathbb R) \cong \mbox{Isom}_+(\mathbb H^2)$ and the composition $\pi_1(W) \twoheadrightarrow \pi_1(\mathcal{B}) \rightarrowtail  PSL(2, \mathbb R) \rightarrowtail  \mbox{Homeo}_+(S^1)$ has infinite image, so $W$ is $CO$ by Proposition \ref{prop: inf image}. 
\end{proof}

The remainder of the section is devoted to the proof of Theorem \ref{thm: cocyclic nls and lo}. 

\begin{proof}[Proof of Theorem \ref{thm: cocyclic nls and lo}]
Assume that $W = M_1 \cup_T M_2$ is an irreducible toroidal rational homology $3$-sphere, where $M_1, M_2$ are knot manifolds with boundary $T$. We will use the notation of Proposition \ref{prop: ls}.  Thus $d_i$ is the order of the rational longitude $\lambda_i$ of $M_i$, $l$ is the least common multiple of $d_1$ and $d_2$, and $s$ is the distance between $\lambda_1$ and $\lambda_2$. Since $W$ is a rational homology 3-sphere, $s = \Delta(\lambda_1, \lambda_2) \geq 1$. Reindex, if necessary, so that $d_1 \leq d_2$.

Let $p: \widetilde W_0 \to W$ be the degree $ls$ cover of Proposition \ref{prop: ls}. 

\begin{claim}
\label{claim: nls, lo, ctf}
If $d_1 > 1$, then $\widetilde W_0$ is $NLS, LO$, and $CTF$.
\end{claim}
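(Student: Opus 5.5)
Since we have reindexed so that $d_1 \leq d_2$, the hypothesis $d_1 > 1$ forces $d_1, d_2 > 1$. This is exactly the hypothesis of Proposition \ref{prop: ls}(2), so the claim should follow essentially immediately from that proposition. The plan is simply to check that the hypotheses are in place and then invoke it.

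Concretely, the first step is to recall the counting from Proposition \ref{prop: ls}(1). The inverse images $\widetilde M_1, \widetilde M_2, \widetilde T$ have $\bar d_2$, $\bar d_1$, and $l$ components respectively. When $d_1, d_2 > 1$, the identities $l = d_1 \bar d_2 = \bar d_1 d_2$ give $\bar d_1, \bar d_2 \leq l/2$. The Mayer--Vietoris estimate in reduced homology for $\widetilde W_0 = \widetilde M_1 \cup_{\widetilde T} \widetilde M_2$ then yields
$$b_1(\widetilde W_0) \geq (l-1) - \big((\bar d_1 - 1) + (\bar d_2 - 1)\big) = l + 1 - \bar d_1 - \bar d_2 \geq 1.$$

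The second step is to pass from positive first Betti number to the three properties. Since $\widetilde W_0$ finitely covers the irreducible manifold $W$, it is closed, orientable and irreducible. Positive first Betti number then gives $LO$ by \cite{BRW05} and $CTF$ by Gabai \cite{Gab83}. It gives $NLS$ because an $L$-space is by definition a rational homology sphere.

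I expect no real obstacle here; all the work was done in Proposition \ref{prop: ls}. The only point needing care is irreducibility of the cover, which holds because finite covers of irreducible $3$-manifolds are irreducible (equivariant sphere theorem / Meeks--Yau). This is what is needed to apply the $LO$ and $CTF$ consequences of $b_1 > 0$.
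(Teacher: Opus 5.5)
Your proposal is correct and matches the paper, whose proof simply cites Proposition \ref{prop: ls}(2), using the standing reindexing $d_1 \leq d_2$ to get $d_1, d_2 > 1$. Your Mayer--Vietoris count and the passage from $b_1 > 0$ to $NLS$, $LO$ and $CTF$ reproduce that proposition's proof, with irreducibility of the finite cover, which the paper leaves implicit, made explicit.
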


\begin{proof} 
This is Proposition \ref{prop: ls}(2). 
\end{proof} 

Suppose that $d_1 = 1$, and therefore $d_2 = l$. Proposition \ref{prop: ls}(1) then implies that the inverse images $\widetilde M_1, \widetilde M_2$, and $\widetilde T$ of $M_1, M_2$, and $T$ in $\widetilde W_0$ have $d_2, 1$, and $d_2$ components, respectively. Then each component $\widetilde M_{1j}$ of $\widetilde M_1$, $1 \leq j \leq l$, is a knot manifold. If some $\widetilde M_{1j}$ is not a rational homology solid torus, then $b_1(\widetilde W_0) > 0$, so that $\widetilde W_0$ is $NLS, LO,$ and $CTF$.  Assume then that each $\widetilde M_{1j}$ is a rational homology solid torus. 

Recall the compact, orientable, connected surface $F_i$ properly embedded in $M_i$ whose boundary is $d_i \lambda_i$. It follows from (a) of the proof of Proposition \ref{prop: ls} that $F_1$ and $F_2$ lift to $\widetilde W_0$. Hence, as $d_1 = 1$, the rational longitude $\tilde \lambda_{1j}$ of $\widetilde M_{1j}$ is integrally null-homologous. If $\tilde \lambda_{2j}$ is the slope of a lift of $\lambda_2$ to $\partial \widetilde M_{1j}$, then Proposition \ref{prop: ls}(3) implies that $\Delta(\tilde \lambda_{1j}, \tilde \lambda_{2j}) = 1$. 

\begin{claim}
\label{claim: nls}
If $d_1 = 1$, then $\widetilde W_0$ is $NLS$.
\end{claim}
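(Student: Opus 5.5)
We will apply the gluing theorem, Proposition \ref{prop: general * gluing}, with $\ast = NLS$ to the decomposition
$$\widetilde W_0 = \widetilde M_2 \cup \widetilde M_{11} \cup \cdots \cup \widetilde M_{1l}.$$
Here $M_0 = \widetilde M_2$ is connected by Proposition \ref{prop: ls}(1), since $\bar d_1 = 1$ when $d_1 = 1$. Its boundary consists of the $l$ tori $\widetilde T_1, \ldots, \widetilde T_l$, and each of these is incompressible because it covers the incompressible torus $T$. The pieces $\widetilde M_{1j}$ are knot manifolds. We may assume that $\widetilde W_0$ is a rational homology sphere, since otherwise $b_1(\widetilde W_0)>0$ and $\widetilde W_0$ is $NLS$ directly. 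It is also irreducible, being a finite cover of an irreducible manifold. The candidate multislope is $(\tilde \lambda_{21}, \ldots, \tilde \lambda_{2l})$ on $\partial \widetilde M_2$, where $\tilde \lambda_{2j}$ is the slope of the lift of $\lambda_2$ to $\widetilde T_j$.

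The first step is to check that each $\tilde\lambda_{2j}$ is $NLS$-detected in $\widetilde M_{1j}$, and this is immediate from what precedes the claim. Each $\widetilde M_{1j}$ is an irreducible rational homology solid torus (irreducible as a cover of a piece of an irreducible manifold), and it is not $S^1\times D^2$ because its boundary is incompressible. Its rational longitude $\tilde\lambda_{1j}$ is integrally null-homologous because $F_1$ lifts. Finally, $\Delta(\tilde\lambda_{1j},\tilde\lambda_{2j})=1$ by Proposition \ref{prop: ls}(3). Proposition \ref{prop: meridional detn}(1) therefore shows that $\tilde\lambda_{2j}$ is $NLS$-detected in $\widetilde M_{1j}$.

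The second step, which I expect to be the main obstacle, is to show that $(\tilde\lambda_{21},\ldots,\tilde\lambda_{2l})$ is an $NLS$-detected multislope on $\partial\widetilde M_2$. This multislope is exactly the boundary multislope of the lifted surface $\widetilde F_2 \subset \widetilde M_2$. Indeed, $F_2$ lifts by (a), and since $\widetilde M_2$ is connected and is the preimage of $M_2$ under a degree $l s$ cover, the lift is a collection of disjoint copies of $F_2$. Each copy meets every $\widetilde T_j$ in curves of slope $\tilde\lambda_{2j}$, because $\partial F_2$ consists of $d_2 = l$ parallel copies of $\lambda_2$ which lift to $\widetilde T_j$.

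So $\widetilde M_2$ contains a connected, non-separating essential surface meeting every boundary component, and the multislope is its boundary slope vector. The expected tool is the multislope version of detection in \cite{BGH26}: for a manifold with toroidal boundary, the boundary multislope of a fibre, or more generally of a non-separating essential surface meeting all boundary components, is $NLS$-detected. I would quote that result from \cite{BGH26}. If only the fibred case is available there, the alternative is to argue via the rational-longitude analogue: $(\tilde\lambda_{2j})_j$ plays the role of the rational longitude multislope of $\widetilde M_2$, because it spans the kernel of $H_1(\partial \widetilde M_2;\mathbb Q)\to H_1(\widetilde M_2;\mathbb Q)$ along with $[\widetilde F_2]$. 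Rational longitudes are always $NLS$-detected, via the nonvanishing of the Heegaard Floer invariant at the longitude.

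With both hypotheses of Proposition \ref{prop: general * gluing} verified, $\widetilde W_0$ is $NLS$.
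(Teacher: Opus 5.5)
\textbf{Gap in step two.} Your architecture is the paper's. You use the same decomposition $\widetilde W_0=\widetilde M_2\cup\widetilde M_{11}\cup\cdots\cup\widetilde M_{1l}$, Proposition~\ref{prop: meridional detn}(1) for each $\widetilde M_{1j}$ (your first step is exactly the paper's), and then Proposition~\ref{prop: general * gluing}. The gap is in your second step, the only non-formal part of the claim. You never actually establish that $(\tilde\lambda_{21},\ldots,\tilde\lambda_{2l})$ is $NLS$-detected in $\widetilde M_2$. You propose to quote a result from \cite{BGH26} whose existence and form you are unsure of.

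Your fallback does not work:
\begin{itemize}
\item For $l>1$ the kernel of $H_1(\partial\widetilde M_2;\mathbb Q)\to H_1(\widetilde M_2;\mathbb Q)$ is $l$-dimensional. So $\partial[\widetilde F_2]$ cannot span it.
\item There is no ``rational longitude'' of a manifold with $l$ boundary tori to appeal to.
\item Non-vanishing of Heegaard Floer invariants at the rational longitude is a statement about knot manifolds. It says nothing directly about multislopes.
\end{itemize}

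\textbf{The missing idea.} Verify detection directly from the definition, as the paper does via \cite[Definition 4.6]{BGH26}. Attach twisted $I$-bundles over the Klein bottle $N_1,\ldots,N_l$ to $\widetilde M_2$, with the rational longitude of $N_j$ identified with $\tilde\lambda_{2j}$. Then show the resulting closed manifold $X$ is $NLS$; the paper does this by showing $b_1(X)>0$. Your surface gives this at once:
\begin{itemize}
\item A lift $\widetilde F$ of $F_2$ is non-separating in $\widetilde M_2$. Since $f(\alpha_2)\equiv-\bar d_1=-1$ generates $\mathbb Z/ls$, the preimage of $\alpha_2$ is a single loop crossing each lift of $F_2$ exactly once.
\item $\partial\widetilde F\cap\widetilde T_j$ consists of $m_j\ge 0$ coherently oriented curves of slope $\tilde\lambda_{2j}$.
\item The rational longitude of $N_j$ has order $2$, so twice it bounds in $N_j$. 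Hence $2\widetilde F$ caps off to a closed class $\Sigma\in H_2(X)$.
\item A loop in $\widetilde M_2$ meeting $\widetilde F$ once has intersection number $2$ with $\Sigma$. So $b_1(X)>0$, $X$ is $NLS$, and the multislope is $NLS$-detected.
\end{itemize}
With this inserted, the rest of your argument goes through as written. Your preliminary reduction to the case that $\widetilde W_0$ is a rational homology sphere, which Proposition~\ref{prop: general * gluing} requires, is a sensible addition that the paper leaves implicit.
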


\begin{proof} 
Each lift of $F_2$ to the connected manifold $\widetilde M_2$ is non-separating, so the first Betti number of the manifold obtained by attaching $l$ twisted $I$-bundles $N_1, N_2, \ldots, N_l$ over the Klein bottle to $\widetilde M_2$, where the rational longitude of $N_j$ is identified with $\tilde \lambda_{2j}$, is strictly positive. Hence, the multislope $(\tilde \lambda_{21}, \tilde \lambda_{22}, \ldots, \tilde \lambda_{2l})$ is $NLS$-detected in $\widetilde M_2$ (\cite[Definition 4.6]{BGH26}). Since $\Delta(\tilde \lambda_{1j}, \tilde \lambda_{2j}) = 1$, Proposition \ref{prop: meridional detn}(1) shows that $\tilde \lambda_{2j}$ is $NLS$-detected in $\widetilde M_{1j}$ and therefore Proposition \ref{prop: general * gluing} implies that $\widetilde W_0$ is $NLS$.  
\end{proof}

Our approach to understanding when $\widetilde W_0$ is $LO$ or $CTF$ when $d_1 = 1$ is similar. The first thing to note is that it can be proven that $(\tilde \lambda_{21}, \tilde \lambda_{22}, \ldots, \tilde \lambda_{2l})$ is $LO$-detected in $\widetilde M_2$ as in \cite[Example 6.3]{BGH26}, and it is $CTF$-detected in $\widetilde M_2$ by the main result of \cite{Gab83}. Proposition \ref{prop: general * gluing} then implies that to show that $\widetilde W_0$ is $LO$, respectively $CTF$, it suffices for $\tilde \lambda_{2j}$ to be $LO$-detected, respectively $CTF$-detected, in $\widetilde M_{1j}$ for each $j$. Though we cannot do this in all cases (see the restrictions on  $LO$ and $CTF$ slope detection in parts (2) and (3) of Proposition \ref{prop: meridional detn}), we can in the situations considered in Claims \ref{claim: ihst} and \ref{claim: fibred} below. 

\begin{claim}
\label{claim: ihst}
If $d_1 = 1$ and one of $M_1, M_2$ is an integer homology solid torus, then $\widetilde W_0$ is $LO$. 
\end{claim}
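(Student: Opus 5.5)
The plan is to reduce the claim to the gluing criterion, Proposition \ref{prop: general * gluing} with $\ast = LO$, applied to the decomposition $\widetilde W_0 = \widetilde M_2 \cup \widetilde M_{11} \cup \cdots \cup \widetilde M_{1l}$. Here $\widetilde M_2$ plays the role of $M_0$, and we are still in the situation where each $\widetilde M_{1j}$ is a rational homology solid torus.

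On the $\widetilde M_2$ side I will take as given, as indicated just before the claim, that $(\tilde\lambda_{21}, \ldots, \tilde\lambda_{2l})$ is $LO$-detected in $\widetilde M_2$. The argument is that of \cite[Example 6.3]{BGH26}: attaching twisted $I$-bundles over the Klein bottle along these slopes gives positive $b_1$, because the lifts of $F_2$ are non-separating. So everything reduces to showing that $\tilde\lambda_{2j}$ is $LO$-detected in each $\widetilde M_{1j}$. We already know that $\tilde\lambda_{1j}$ is integrally null-homologous and that $\Delta(\tilde\lambda_{1j}, \tilde\lambda_{2j}) = 1$. Hence Proposition \ref{prop: meridional detn}(2) gives the result as soon as each $\widetilde M_{1j}$ is an integer homology solid torus.

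First step: arrange that the integer homology solid torus is $M_1$. If $M_2$ is an integer homology solid torus, then $d_2 = 1$, so $d_1 = d_2 = l = 1$. The construction of Proposition \ref{prop: ls} is then symmetric in $M_1$ and $M_2$, so after reindexing we may assume $M_1$ is the integer homology solid torus. In that case $l = d_2$, and by Proposition \ref{prop: ls}(1) and the computation of $f$ on the image of $H_1(M_1)$, which is $\langle \bar d_2\rangle = \langle l\rangle$ of order $s$, each $\widetilde M_{1j} \to M_1$ is the degree-$s$ cyclic cover induced by $H_1(M_1) \cong \mathbb Z \to \mathbb Z/s$. In other words, each $\widetilde M_{1j}$ is the $s$-fold cyclic cover of a "knot exterior" in an integer homology sphere.

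Second step, which I expect to be the main obstacle: an $s$-fold cyclic cover of an integer homology solid torus need not be an integer homology solid torus, since its torsion is governed by the Alexander polynomial at $s$-th roots of unity. So Proposition \ref{prop: meridional detn}(2) may not apply directly to $\widetilde M_{1j}$. To get around this, I would try to do the detection downstairs and then lift it. Left-orders restrict to finite-index subgroups, and $LO$-detection is defined through the behaviour of left-orders of $\pi_1$ on the peripheral subgroup. So a slope $LO$-detected in $M_1$ should have its lift $LO$-detected in any finite cover restricting to a connected cover of the boundary torus. I would therefore look for a slope in $M_1$ at distance $1$ from $\lambda_1$ whose lift to $\partial\widetilde M_{1j}$ is $\tilde\lambda_{2j}$; such a slope is $LO$-detected by Proposition \ref{prop: meridional detn}(2) applied to $M_1$ itself. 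Because $\Delta(\tilde\lambda_{1j}, \tilde\lambda_{2j}) = 1$ upstairs and $\widetilde T_j \to T$ unwraps only in the $\mu_1$-direction, the natural candidate is the image of $\tilde\lambda_{2j}$ in $H_1(T)$, namely $\lambda_2/s$ in the appropriate rational sense. Checking that this image is a genuine rational slope at distance $1$ from $\lambda_1$, and that detection survives passage to the cover, is the point I would verify first; if it fails I would fall back on proving directly that $H_1(\widetilde M_{1j})$ is torsion-free in our situation.
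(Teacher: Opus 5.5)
Your reduction is sound up to the point you flag. That covers the $LO$-detection of $(\tilde\lambda_{21},\ldots,\tilde\lambda_{2l})$ in $\widetilde M_2$, the reduction to $M_1$ being the integer homology solid torus, and the identification of each $\widetilde M_{1j}$ with the $s$-fold cyclic cover of $M_1$. Beyond that point, however, neither of your two routes works.

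\textbf{The downstairs route.} Each $\widetilde T_j$ is the cover of $T$ corresponding to $\ker(f|_{H_1(T)})=\langle\lambda_1,s\mu_1\rangle$. The lift $\tilde\lambda_{2j}$ is the class $\lambda_2=a\lambda_1+s\mu_1$, which is already primitive in $H_1(T)$ because $\gcd(a,s)=1$. So the only slope on $T$ that lifts to $\tilde\lambda_{2j}$ is $\lambda_2$ itself (``$\lambda_2/s$'' is the same slope), and $\Delta(\lambda_1,\lambda_2)=s$. The distance becomes $1$ only because the lattice shrinks to $\langle\lambda_1,s\mu_1\rangle$ upstairs. Hence Proposition \ref{prop: meridional detn}(2) cannot be applied in $M_1$.

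In fact no downstairs argument can work. Take $M_1$ the right-handed trefoil exterior and $\lambda_2$ the slope $2$, so $s=2$. Then $\lambda_2$ lies in the interior of the $L$-space interval $[1,\infty]$, so it is not $NLS$-detected. Since $M_1$ is Seifert fibred, it is then not $LO$-detected either, as the notions of detection coincide for Seifert fibred knot manifolds. Yet its lift to the double cyclic cover is $LO$-detected.

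\textbf{The fallback.} The same example defeats it: the double cyclic cover of the trefoil exterior has $H_1\cong\mathbb Z\oplus\mathbb Z/3$. So $\widetilde M_{1j}$ is in general only a rational homology solid torus, not an integer one.

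\textbf{The missing idea.} You need to lift a representation rather than a slope; this is the paper's Lemma \ref{lemma: lo detd}.
\begin{itemize}
\item By \cite[Theorem 1.5]{BGH26} there is $\rho:\pi_1(M_1)\to\mbox{Homeo}_+(S^1)$ whose restriction to $\pi_1(\partial M_1)$ has a fixed point.
\item Since $H^2(M_1)=0$, the Euler class of $\rho$ vanishes. So $\rho$ lifts to $\tilde\rho:\pi_1(M_1)\to\mbox{Homeo}_{\mathbb Z}(\mathbb R)$, with $\tilde\rho(\lambda_1)$ of translation number $2g(M_1)-1>0$.
\item Restricting to $\pi_1(\widetilde M_{1j})$ preserves all of this. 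It is still a lift, and the peripheral subgroup still has a fixed point. Because $\lambda_1$ itself lifts ($f(\lambda_1)=0$), $\tilde\lambda_{1j}$ still has translation number $2g(M_1)-1$.
\item Since $\tilde\lambda_{1j}$ is integrally null-homologous, \cite[Proposition 6.10]{BGH26} then shows that every slope at distance $1$ from $\tilde\lambda_{1j}$ is $LO$-detected in $\widetilde M_{1j}$. In particular $\tilde\lambda_{2j}$ is.
\end{itemize}
The integer homology hypothesis is used only on $M_1$, to kill the Euler class. That is exactly why its failure for the cover does no harm.
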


\begin{proof} 
Without loss of generality, we can assume that $M_1$ is an integer homology solid torus. Since $\Delta(\tilde \lambda_{1j}, \tilde \lambda_{2j}) = 1$, the following lemma implies that $\tilde \lambda_{2j}$ is $LO$-detected in $\widetilde M_{1j}$. As noted above, this implies that $\widetilde W_0$ is $LO$.
\end{proof}

\begin{lemma}
\label{lemma: lo detd}
Suppose that $M$ is an irreducible integer homology solid torus with incompressible boundary and $\widetilde M \to M$ is the cyclic cover of degree $n \geq 1$. Then any slope of distance $1$ to the rational longitude of $\widetilde M$ is $LO$-detected in $\widetilde M$. 
\end{lemma}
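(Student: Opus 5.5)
We would reduce the lemma to Proposition \ref{prop: meridional detn}(2) applied directly to $\widetilde M$. Since $M$ is an integer homology solid torus, $H_1(M)\cong\mathbb Z$, so for each $n$ there is a unique $n$-fold cyclic cover $\widetilde M\to M$, namely the one induced by $H_1(M)\to\mathbb Z\to\mathbb Z/n$. The proposition needs three things of $\widetilde M$: that it is irreducible, that it is not $S^1\times D^2$, and that it is an integer homology solid torus (so that its longitude is integrally null-homologous). Granting these, part (2) gives the conclusion immediately.

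\textbf{Irreducibility.} A finite cover of an irreducible $3$-manifold is irreducible, by the equivariant sphere theorem (Meeks--Simon--Yau).

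\textbf{Boundary and $\widetilde M\not\cong S^1\times D^2$.} The longitude $\lambda_M$ of $M$ is null-homologous, hence lies in the kernel of the classifying map. The meridian-type dual class maps to a generator of $\mathbb Z/n$. It follows that $\partial\widetilde M$ is a single torus that covers $\partial M$ with degree $n$, with $\lambda_M$ lifting to $n$ parallel copies of a curve $\tilde\lambda$. Since $\partial M$ is incompressible and $\pi_1$-injectivity passes to covers, $\partial\widetilde M$ is incompressible. Hence $\widetilde M$ is a knot manifold and in particular $\widetilde M\not\cong S^1\times D^2$.

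\textbf{Homology.} This is the main obstacle: $\widetilde M$ need not be an integer homology solid torus. For the infinite cyclic cover $M_\infty$ with deck generator $t$, the Wang/Milnor sequence gives
\[
H_1(\widetilde M)\cong \mathbb Z\oplus H_1(M_\infty)/(t^n-1).
\]
The second summand is finite exactly when no root of unity of order dividing $n$ is a root of the Alexander polynomial $\Delta_M$. It has order $\prod_{\zeta^n=1,\ \zeta\neq1}|\Delta_M(\zeta)|$, which can be nontrivial. So the naive reduction fails in general, and we have to work harder.

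\textbf{Case with torsion.} Suppose $\widetilde M$ has nontrivial torsion but remains a rational homology solid torus. The lift of a Seifert surface $F$ of $M$ is a surface in $\widetilde M$ with a single boundary component representing $\tilde\lambda$. Hence the rational longitude is $\tilde\lambda$ and it is integrally null-homologous, which is what Proposition \ref{prop: meridional detn}(1) needs. However, (2) as stated requires integer homology. We would therefore prove $LO$-detection directly, following \cite{BGH26}.

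\textbf{Direct LO argument.} Let $\mu$ be a slope with $\Delta(\mu,\tilde\lambda)=1$. The commutator subgroup $[\pi_1M,\pi_1M]$ is the common kernel of both covers, and it is the fundamental group of the infinite cyclic cover; it is LO since it is a non-trivial subgroup of infinite index. We would take a left order on $\pi_1(M_\infty)$ and extend it lexicographically to the index-$n$ subgroup $\pi_1(\widetilde M)$, which maps onto $\mathbb Z$ with kernel $\pi_1(M_\infty)$. For an order constructed this way, the peripheral subgroup $\langle\tilde\lambda,\mu\rangle$ satisfies: $\tilde\lambda$ lies in the kernel (it is infinitesimal relative to the $\mathbb Z$-direction) and $\mu$ maps to $\pm1$. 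One then uses the conjugation flexibility of orders on $\pi_1(M_\infty)$, exactly as in the proof of Proposition \ref{prop: meridional detn}(2), to produce a family of orders whose restrictions to $\pi_1(\partial\widetilde M)\cong\mathbb Z^2$ converge to the line spanned by $\mu$. That argument only used the structure of the infinite cyclic cover and the fact that $\mu$ maps to a generator of the $\mathbb Z$ quotient. Both properties persist here, so it should apply verbatim.

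If $\widetilde M$ has $b_1>1$, the same argument applies. Alternatively, one can invoke the positive-Betti-number detection results of \cite{BGH26}, since then every slope, in particular $\mu$, is LO-detected.
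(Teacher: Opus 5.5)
Your preliminary steps are fine. $\widetilde M$ is a knot manifold, and its rational longitude $\tilde\lambda$ is integrally null-homologous because a Seifert surface of $M$ lifts. The case $b_1(\widetilde M)>1$ follows from the positive Betti number detection results of \cite{BGH26}. You also correctly see that Proposition~\ref{prop: meridional detn}(2) cannot be applied to $\widetilde M$, since $H_1(\widetilde M)$ can have torsion.

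The gap is your \emph{direct LO argument}, which cannot work. Take any left order on $\pi_1(\widetilde M)$ built lexicographically from $1\to\pi_1(M_\infty)\to\pi_1(\widetilde M)\to\mathbb Z\to 1$. In it, $\pi_1(M_\infty)$ is convex, so its intersection with the peripheral subgroup, namely $\langle\tilde\lambda\rangle$, is convex there. As you note yourself, $\tilde\lambda$ is then infinitesimal relative to $\mu$. So the line this order determines in $H_1(\partial\widetilde M;\mathbb R)$ is the line through $\tilde\lambda$: such orders detect the rational longitude, not $\mu$. None of your proposed modifications escape this:
\begin{itemize}
\item Changing the order on $\pi_1(M_\infty)$ only flips the sign of $\tilde\lambda$.
\item Conjugating by elements of $\pi_1(\widetilde M)$ keeps the normal subgroup $\pi_1(M_\infty)$ convex.
\item The set of left orders in which a given subgroup is convex is closed in the space of left orders.
\end{itemize}
Hence no conjugate or limit of these orders detects $\mu$ either. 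Your assertion that the proof of Proposition~\ref{prop: meridional detn}(2) uses only the infinite cyclic cover is unsupported, and the observation above shows that no argument of that shape can succeed.

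The missing idea is to use circle actions of $\pi_1(M)$ rather than orders on $\pi_1(\widetilde M)$, and only afterwards pass to the cover.
\begin{itemize}
\item By \cite[Theorem 1.5(1)]{BGH26} there is $\rho:\pi_1(M)\to\mathrm{Homeo}_+(S^1)$ whose restriction to $\pi_1(\partial M)$ has a fixed point.
\item Its Euler class lies in $H^2(M)=0$; this is exactly where the integer homology hypothesis on $M$ is used. So $\rho$ lifts to $\tilde\rho:\pi_1(M)\to\mathrm{Homeo}_{\mathbb Z}(\mathbb R)$.
\item By \cite[Theorem 1.5(2)]{BGH26}, the translation number of $\tilde\rho(\lambda)$ is $2g(M)-1>0$.
\item Restricting $\rho$ and $\tilde\rho$ to the finite-index subgroup $\pi_1(\widetilde M)$ preserves the fixed point on the peripheral subgroup and the lifting property.
\item Since $\lambda\in\pi_1(\widetilde M)$ represents $\tilde\lambda$, the translation number of the longitude is unchanged and nonzero.
\item \cite[Proposition 6.10]{BGH26} then gives $LO$-detection of every slope at distance $1$ from $\tilde\lambda$.
\end{itemize}
The Euler class obstruction is thus evaluated on $M$, where it vanishes, and torsion in $H_1(\widetilde M)$ never matters.
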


\begin{proof}
Note that $\widetilde M$ is a knot manifold. If $b_1(\widetilde M) > 1$, then any slope on $\partial \widetilde M$ is $*$-detected, where $* = LO, NLS$, or $CTF$; see \cite[Corollaries 6.12, 4.5, and 5.4]{BGH26}. Hence we may assume that $\widetilde M$ is a rational homology solid torus. Note that the rational longitude $\tilde \lambda$ of $\widetilde M$ is integrally null-homologous. 

By Theorem 1.5(1) of \cite{BGH26}, there exists a homomorphism $\rho: \pi_1(M) \rightarrow \mbox{{\rm Homeo}}_+(S^1)$ whose restriction to $\pi_1(\partial M)$ has a fixed point on $S^1$. The fact that $M$ is an integer homology solid torus implies that $H^2(M) \cong 0$, and therefore the Euler class $e(\rho) \in H^2(M)$ is zero. It follows that $\rho$ lifts to a homomorphism $\tilde \rho: \pi_1(M) \to \mbox{{\rm Homeo}}_{\mathbb Z}(\mathbb R) = \{f \in \mbox{Homeo}_+(\mathbb R) \; | \; f(x + 1) = f(x) + 1\} \leq \mbox{Homeo}_+(\mathbb R)$. Theorem 1.5(2) of \cite{BGH26} then implies that if $\lambda \in \pi_1(\partial M) \leq \pi_1(M)$ represents the rational longitude of $M$, then the translation number of $\tilde \rho(\lambda)$ is $2g(M) - 1 > 0$, where $g(M)$ is the minimal genus of a compact, connected, orientable, essential surface with connected boundary properly embedded in $M$. 

Consider the restriction $\rho_0$ of $\rho$ to $\pi_1(\widetilde M) \leq \pi_1(M)$. Clearly, $\rho_0(\pi_1(\partial \widetilde M))$ has a fixed point on $S^1$. The restriction $\tilde \rho_0$ of $\tilde \rho$ to $\pi_1(\widetilde M)$ is a lift of $\rho_0$ for which the translation number of $\tilde \rho_0(\tilde \lambda)$ equals the translation number of $\tilde \rho(\lambda) = 2g(M) - 1 > 0$. By \cite[Proposition 6.10]{BGH26}, any slope of distance $1$ to $\tilde \lambda$ is $LO$-detected in $\widetilde M$. 
\end{proof}

\begin{claim}
\label{claim: fibred}
If $d_1 = 1$ and one of $M_1, M_2$ is a fibred knot manifold, then $\widetilde W_0$ is $CTF$. 
\end{claim}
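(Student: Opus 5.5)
The strategy parallels the proof of Claim \ref{claim: ihst}. We are in the situation $d_1 = 1$ with each $\widetilde M_{1j}$ a rational homology solid torus. As noted before Claim \ref{claim: ihst}, the multislope $(\tilde \lambda_{21}, \ldots, \tilde \lambda_{2l})$ is $CTF$-detected in $\widetilde M_2$ by \cite{Gab83}. By Proposition \ref{prop: general * gluing} it therefore suffices to show that $\tilde \lambda_{2j}$ is $CTF$-detected in $\widetilde M_{1j}$ for each $j$. The case to aim for is the one where $M_1$ is the fibred knot manifold.

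\textbf{Case $M_1$ fibred.} First, $M_1$ has connected boundary and fibres with a fibre $F$ having connected boundary. Hence $\partial F$ is the longitude, $d_1 = 1$, and $F$ is a Seifert surface realising the class $F_1$. By (a) in the proof of Proposition \ref{prop: ls}, $f$ vanishes on the image of $H_1(F)$. So $F$ lifts to each $\widetilde M_{1j}$, and the restriction of $p$ to $\widetilde M_{1j} \to M_1$ is the cyclic cover obtained by unwrapping the circle direction of the fibration. Thus $\widetilde M_{1j}$ fibres over the circle with fibre a copy of $F$, whose boundary is connected and represents $\tilde \lambda_{1j}$. We are assuming $\widetilde M_{1j}$ is a rational homology solid torus; otherwise $b_1(\widetilde W_0) > 0$ and we are done. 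Moreover $\widetilde M_{1j} \not\cong S^1 \times D^2$, since $F$ is essential and not a disc. Then $\tilde\lambda_{1j}$ is integrally null-homologous, bounding the lifted fibre, and Proposition \ref{prop: meridional detn}(3) applies. Since $\Delta(\tilde\lambda_{1j}, \tilde\lambda_{2j}) = 1$ by Proposition \ref{prop: ls}(3), $\tilde\lambda_{2j}$ is $CTF$-detected in $\widetilde M_{1j}$, and gluing gives $CTF$.

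\textbf{Case $M_2$ fibred.} Here $d_2 = 1$, so $l = 1$ and $\widetilde W_0$ is the $s$-fold cover. Then $\widetilde M_1$ and $\widetilde M_2$ are each connected, and the situation is symmetric in the indices. I would redo the argument with the roles of $M_1$ and $M_2$ swapped: now $\widetilde M_2$ is a fibred rational homology solid torus (again assuming $b_1=0$), with $\tilde\lambda_{2}$ integrally null-homologous. For the other side, $\widetilde M_1$ is a rational homology solid torus whose longitude $\tilde\lambda_1$ bounds a lift of $F_1$. Its longitude slope is $CTF$-detected by Gabai's theorem: the sutured manifold hierarchy starting from a lift of $F_1$ gives a taut foliation with $\partial$-slope $\tilde\lambda_1$. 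Proposition \ref{prop: meridional detn}(3) in $\widetilde M_2$ handles $\tilde\lambda_1$, which lies at distance $1$ from $\tilde\lambda_2$. Gluing via Proposition \ref{prop: general * gluing} with $M_0 = \widetilde M_1$ and $n = 1$ then finishes.

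\textbf{Main obstacle.} The key step is verifying that the restricted cover $\widetilde M_{1j} \to M_1$ is exactly the cyclic cover along the fibration, so that it inherits a fibration whose fibre has connected boundary. This follows since $p|_{\widetilde M_{1j}}$ corresponds to $f|_{H_1(M_1)}$, which by the proof of Proposition \ref{prop: ls} is a multiple of intersection with $F_1$. A second point to check is that Gabai's result really gives $CTF$-detection of the rational longitude in the sense of \cite{BGH26}, as is asserted in the text for $\widetilde M_2$.
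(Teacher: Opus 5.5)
Your proposal is correct and follows the paper's route. You reduce, via Gabai's $CTF$-detection in $\widetilde M_2$ and Proposition \ref{prop: general * gluing}, to showing that each $\widetilde M_{1j}$ is fibred with integrally null-homologous longitude, and then apply Proposition \ref{prop: meridional detn}(3) using $\Delta(\tilde\lambda_{1j},\tilde\lambda_{2j})=1$. The paper just says ``without loss of generality $M_1$ is fibred'' and that each $\widetilde M_{1j}$ is then fibred; your observations that a fibred $M_2$ forces $d_1=d_2=l=1$ (so the setup is symmetric), and that $p|_{\widetilde M_{1j}}$ is the $s$-fold cyclic cover dual to the fibre, supply the justification the paper leaves implicit.
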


\begin{proof} 
Without loss of generality, we can assume that $M_1$ is a fibred knot manifold. Then each $\widetilde M_{1j}$ is as well and therefore Proposition \ref{prop: meridional detn}(3) implies that $\tilde \lambda_{2j}$ is $CTF$-detected in $\widetilde M_{1j}$ for each $j$. As noted above, this implies that $\widetilde W_0$ is $CTF$.
\end{proof}

\begin{claim}
\label{claim: no ihst}
If $d_1 = 1$ and neither $M_1$ nor $M_2$ is an integer homology solid torus, then $W$ has a finite cyclic cover which is $LO, NLS$, and $CTF$. 
\end{claim}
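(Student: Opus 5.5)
The plan is to show that both $H_1(M_1)$ and $H_1(M_2)$ have non-zero torsion, and then apply Proposition \ref{prop: t_i ne 0}. That proposition gives a finite cyclic cover $\widetilde W \to W$ with $b_1(\widetilde W) > 0$, hence $\widetilde W$ is $LO$, $NLS$, and $CTF$ by \cite{BRW05, Gab83}. So the claim reduces to the following homological statement: a knot manifold that is a rational homology solid torus but not an integer homology solid torus has non-trivial torsion in $H_1$.

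To prove this, write $H_1(M_i) \cong \mathbb Z \oplus T_1(M_i)$ as in the proof of Proposition \ref{prop: t_i ne 0}. This decomposition holds because $M_i$ is a rational homology solid torus, which in turn holds because $W$ is a rational homology $3$-sphere.

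By definition, $M_i$ is an integer homology solid torus exactly when $H_1(M_i) \cong \mathbb Z$, that is, when $T_1(M_i) = 0$. The hypothesis that neither $M_1$ nor $M_2$ is an integer homology solid torus therefore says directly that $T_1(M_1) \neq 0$ and $T_1(M_2) \neq 0$. The hypothesis $d_1 = 1$ is not needed for this step; it only places us in the case left open by Claims \ref{claim: ihst} and \ref{claim: fibred}.

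Applying Proposition \ref{prop: t_i ne 0} then finishes the argument. The only potential obstacle is conventions. If "integer homology solid torus" were defined by $H_*(M;\mathbb Z) \cong H_*(S^1\times D^2;\mathbb Z)$ instead of by $H_1 \cong \mathbb Z$, I would check equivalence: $H_2(M_i) = 0$ for a rational homology solid torus with torus boundary, as noted in the proof of Proposition \ref{prop: t_i ne 0}, and $H_3 = 0$. So the two definitions agree, and the reduction above stands.
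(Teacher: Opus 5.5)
Your proposal is correct and matches the paper, whose entire proof of this claim is the citation of Proposition~\ref{prop: t_i ne 0}. You have also made explicit the step the paper leaves implicit: since $M_i$ is a rational homology solid torus with $H_2(M_i)=0$, failing to be an integer homology solid torus is the same as $T_1(M_i)\neq 0$.
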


\begin{proof} 
This is Proposition \ref{prop: t_i ne 0}. 
\end{proof}

The first assertion of Theorem \ref{thm: cocyclic nls and lo} follows from Claims \ref{claim: nls, lo, ctf}, \ref{claim: nls}, \ref{claim: ihst},  
and \ref{claim: no ihst}. For the second part of the theorem, if $d_1>1$, the conclusion follows from Claim~\ref{claim: nls, lo, ctf}. Claim~\ref{claim: no ihst} handles the case where $d_1=1$ and neither $M_1$ nor $M_2$ is an integer homology solid torus. Finally, suppose $d_1=1$, and one of the $M_i$ is an integer homology solid torus. If we also have one of the $M_i$ is a fibred knot manifold, then Claims~\ref{claim: nls}, \ref{claim: ihst}, and \ref{claim: fibred} imply that $\widetilde{W}_0$ is $NLS$, $LO$ and $CTF$, respectively.

This completes the proof of Theorem \ref{thm: cocyclic nls and lo}
\end{proof}

{
\footnotesize
\bibliographystyle{alpha}
\bibliography{bgh_co}
}

\end{document}